\title{Higher topological complexity of aspherical spaces}
\date{}                                           
\author{Michael Farber}
\address{School of Mathematical Sciences \\
Queen Mary, University of London\\
London, E1 4NS\\
United Kingdom}
\email{m.farber@qmul.ac.uk}
\thanks{Michael Farber was partially supported by a grant from the Leverhulme Foundation.}
\author{John Oprea}
\address{Department of Mathematics\\
Cleveland State University\\
Cleveland OH 44115  \\
U.S.A.}
\email{jfoprea@gmail.com}
\begin{document}

\theoremstyle{plain}
\newtheorem{theorem}{Theorem}[section]
\newtheorem{proposition}[theorem]{Proposition}
\newtheorem{lemma}[theorem]{Lemma}
\newtheorem{corollary}[theorem]{Corollary}
\newtheorem{definition}[theorem]{Definition}
\newtheorem{remark}[theorem]{Remark}
\newtheorem{example}[theorem]{Example}

\newcommand{\secref}[1]{\S\ref{#1}}
\newcommand{\thmref}[1]{Theorem~\ref{#1}}
\newcommand{\propref}[1]{Proposition~\ref{#1}}
\newcommand{\lemref}[1]{Lemma~\ref{#1}}
\newcommand{\corref}[1]{Corollary~\ref{#1}}
\newcommand{\conjref}[1]{Conjecture~\ref{#1}}
\newcommand{\properref}[1]{Properties~\ref{#1}}
\newcommand{\remref}[1]{Remark~\ref{#1}}
\newcommand{\examref}[1]{Example~\ref{#1}}
\newcommand{\introthmref}[1]{Theorem~\ref{#1}}
\newcommand{\introcorref}[1]{Corollary~\ref{#1}}
\newcommand{\introprobref}[1]{Problem~\ref{#1}}

\def\CP{\mathbb C{\rm P}}
\def\RP{\mathbb R{\rm P}}
\def\HP{\mathbb H{\rm P}}

\newcommand {\Ext}{{\rm {Ext}}}
\newcommand {\Hom}{{\rm {Hom}}}
\newcommand{\C}{{{\mathcal C}}}
\newcommand{\R}{{\mathbf R}}

\newcommand{\Q}{{\mathbf Q}}

\newcommand{\D}{{\mathcal D}}
\newcommand{\Z}{{\mathbb Z}}
\newcommand{\uz}{{\underline \Z}}
\newcommand{\uu}{{\mathfrak u}}
\newcommand{\vv}{{\mathfrak v}}
\newcommand{\od}{{\mathcal {O_D}}}
\newcommand{\odd}{{\mathcal {O_{D'}}}}
\newcommand{\ui}{{\underline I}}
\newcommand{\um}{{\underline M}}
\newcommand{\zpp}{{\Z[\pi\times\pi]}}
\newcommand{\zp}{{\Z[\pi]}}
\newcommand{\uc}{{\underline C}}
\newcommand{\tc}{{\sf {TC}}}
\newcommand{\secat}{{\sf {secat}}}
\newcommand{\uP}{{\underline P}}
\newcommand{\cat} {{\sf {cat}}}
\newcommand{\cd}{{\rm {cd}}}
\newcommand{\F}{{\mathcal {F}}}
\newcommand{\pd}{{\rm {pd}}}
\newcommand{\un}{\underline N}
\newcommand{\ualpha}{\underline \alpha}
\newcommand{\cD}{\mathcal D}
\newcommand{\TC}{{\sf {TC}}}

\begin{abstract}
In this article we study the higher topological complexity $\tc_r(X)$ in the case when $X$ is an aspherical space, $X=K(\pi, 1)$ and  $r\ge 2$.
We give a characterisation of $\tc_r(K(\pi, 1))$ in terms of classifying spaces for equivariant Bredon cohomology.
Our recent paper \cite{FGLO}, joint with M. Grant and G. Lupton, treats the special case $r=2$. We also obtain in this paper useful lower bounds for $\tc_r(\pi)$ in terms of cohomological dimension of subgroups of $\pi\times\pi\times \dots\times \pi$ ($r$ times) with certain properties.
As an illustration of the main technique we find the higher topological complexity of the Higman's groups. We also apply our method
to obtain a lower bound for the higher topological complexity of the right angled Artin (RAA) groups, which, as was
 established in \cite{GGY} by a different method (in a more general situation), coincides with the precise value.
 We finish the paper by a discussion of the $\tc$-generating function $\sum_{r=1}^\infty \tc_{r+1}(X)x^r$ encoding the values of the
 higher topological complexity $\tc_r(X)$ for all values of $r$.
We show that in many examples (including the case when $X=K(H, 1)$ with $H$ being a RAA group) the $\tc$-generating function is a rational function of the form $\frac{P(x)}{(1-x)^2}$ where $P(x)$ is an integer polynomial with $P(1)=\cat(X)$.

\end{abstract}

\maketitle

\section{Introduction and statement of the result}

\subsection{}

Suppose that a mechanical system has to be programmed to move autonomously from any initial state to any final state.
Let $X$ denote the configuration space of the system; points of $X$ represent states of the system and
continuous paths
in $X$ represent
motions of the system.
{\it A motion planning algorithm} is a function which associates with any pair of states $(A, B)\in X\times X$ a continuous motion of the system
starting at $A$ and ending at $B$. In other words, a motion planning algorithm is
a section of the path fibration
\begin{eqnarray}\label{fibration}
p: X^I\to X\times X, \quad p(\gamma) = (\gamma(0), \gamma(1)).
\end{eqnarray}
Here $X^I$ denotes the space of all continuous paths $\gamma: I=[0, 1]\to X$, equipped with the compact-open topology.
Unfortunately, a global motion planning algorithm is impossible to achieve unless the configuration space $X$ is contractible.
If $X$ is not contractible, then only ``local'' motion plans may be found.

{\it The topological complexity}, $\tc(X)$, informally, is the minimal number of continuous rules (i.e. local planners) which are needed to construct an
algorithm for autonomous motion planning of a system having $X$ as its configuration space. The quantity $\tc(X)$, originally introduced in \cite{Far03}
(see also \cite{Finv}), is, in fact, a numerical homotopy invariant of a path-connected topological space $X$ and so may be studied with all the tools
of algebraic topology. A recent survey of the concept $\tc(X)$ and robot motion planning algorithms in practically interesting configuration spaces can be found in \cite{Frecent}.

\subsection{The concept of higher or sequential topological complexity} Yuli Rudyak \cite{Rud} introduced a generalisation of the notion of topological complexity $\tc(X)$ which is usually denoted
 $\tc_r(X)$ and is called {\it the higher} or {\it sequential} topological complexity. 
 Here $X$ is a path-connected topological space and $r\ge 2$ is an integer.
The number $\tc_2(X)$ coincides with $\tc(X)$.
To define $\tc_r(X)$ consider the fibration
\begin{eqnarray}\label{rfibration}
p_r: X^I\to X^r,
\end{eqnarray}
where
\begin{eqnarray}
p_r(\gamma) = (\gamma(0), \gamma\left(\frac{1}{r-1}\right), \gamma\left(\frac{2}{r-1}\right), \dots, \gamma\left(\frac{r-2}{r-1}\right), \gamma(1)), \quad \gamma\in X^I.\end{eqnarray}
As above, $X^I$ denotes the space of all continuous paths $\gamma: I=[0, 1]\to X$ equipped with the compact-open topology.
The notation $X^r$ denotes $X\times X\times \dots\times X$ ($r$ times), the Cartesian product of $r$ copies of $X$.
The map $p_r$ associates with a path $\gamma: I\to X$ in $X$ the sequence of its locations at $r$ points $\frac{i}{r-1}\in I$ where $i=0, 1, \dots, r-1$
includes the initial and final states $\gamma(0), \gamma(1)$ and $r-2$ intermediate points.

\begin{definition}\label{rdef1}
Given a path-connected topological space $X$, {\it the $r$-th sequential topological complexity} of $X$ is defined as the minimal integer
$\tc_r(X)=k$ such that the Cartesian power $X^r$ can be covered by $k+1$ open subsets
$$X^r = U_0 \cup U_1 \cup \dots U_k$$ with the property that for any $i = 0, 1,2,...,k$ there exists a continuous section
$$s_i: U_i \to X^I,\quad
p_r\circ s_i={\rm id}$$ of the fibration (\ref{rfibration}) over $U_i$. If no such $k$ exists we will set $\tc_r(X)=\infty$.
\end{definition}

In other words, $\tc_r(X)$ is the Schwarz genus (or sectional category) of fibration (\ref{rfibration}), see \cite{Sv66}.

\subsection{} The invariant $\tc_r(X)$ has a clear meaning for the motion planning problem of robotics.
Assume that a system (robot) has to be programmed to move autonomously from any initial state to any final state such that it visits $r-2$ additional states on the way. If $X$ denotes the configuration space of the system then $\tc_r(X)$ is the minimal number of continuous rules needed to program the robot to
perform autonomously the indicated task. We note here that the most basic estimate for $\tc_r$ is in terms of the Lusternik-Schnirelmann category
(see \cite{CLOT}):
\begin{eqnarray}\label{basicest}
\cat(X^{r-1}) \leq \tc_r(X) \leq \cat(X^r) \leq r\,\cat(X).
\end{eqnarray}
We won't require any results about category here except for the fact that $\cat(K(\pi,1)) = \cd(\pi)$, where $\cd(\pi)$ is the
cohomological dimension of the group $\pi$. Here the symbol $K(\pi, 1)$ denotes the Eilenberg - MacLane space with the properties
$\pi_i(K(\pi, 1))=0$ for $i\not=1$ and $\pi_1(K(\pi, 1))=\pi.$
Some recent results concerning the invariant $\tc_r(X)$ with $r\ge 2$ can be found in  \cite{BGRT}, \cite{GG}, \cite{GGY}, \cite{Yuz}.
Also, the introduction to \cite{FGLO} gives an account of most of the significant recent developments regarding $\tc(X)$.

\subsection{}
One of the main properties of $\tc_r(X)$ is its homotopy invariance, which means that $\tc_r(X)=\tc_r(Y)$ if $X$ and $Y$ are homotopy equivalent.
In particular we obtain that in the case when $X=K(\pi, 1)$ the number $\tc_r(X)$ is an algebraic invariant of the group $\pi$.
We shall introduce the notation $$\tc_r(\pi) = \tc_r(K(\pi, 1)).$$

Our aim in this paper is to give a characterisation of $\tc_r(\pi)$ in terms of equivariant topology. On the face of it, there seems to be no connection between
these topics, but in our main result Theorem \ref{thm1} we will describe a path that unites them. In fact, Theorem \ref{thm1} generalizes a
similar connection that was displayed in \cite{FGLO} for $\tc$ itself. The emergence of equivariant topology as a player in the study of $\tc$ allows
invariants of the subject such as Bredon cohomology (with respect to a family of subgroups) to be used to estimate $\tc$. Moreover, as a result of
Theorem \ref{thm1}, new and interesting lower bounds are obtained for $\tc_r$ as in Theorem \ref{GLO} below. In previous works, lower bounds for $\tc_r$
tended to arise from cohomology (as ``cuplength''-type calculations). Here, however, Theorem \ref{GLO} gives a lower bound that is more
intrinsic to the subgroup structure of $\pi$. This type of result can be applied even when cuplength structure is missing (as in Theorem \ref{higman} below).

\section{A lower bound for $\tc_r(\pi)$}

\subsection{} In this subsection we shall state a useful corollary of our main result (Theorem \ref{thm1}) which gives a lower bound for $\tc_r(\pi)$; it has the advantage of
being stated using very simple algebraic terms.

Fix an integer $r\ge 2$ and consider the $r$-fold Cartesian product $$\pi^r =\pi\times\pi\times\dots\times \pi.$$
We shall consider the diagonal subgroup
\begin{eqnarray}\label{delta}
\Delta=\{(g, g, \dots, g); g\in \pi\}\subset \pi^r.
\end{eqnarray}
 Any subgroup $H\subset \pi^r$ conjugate to the diagonal $\Delta$ has the form $H=c^{-1}\Delta c$ where $c=(c_1, c_2, \dots, c_r)\in \pi^r$.
Consider a subgroup $K\subset \pi^r$  with the property that
$K\cap H=\{1\}$ for any subgroup $H\subset \pi^r$ conjugate to $\Delta$. This property of $K$ can be characterised as follows.
For an element $g\in \pi$ we shall denote by $[g]\subset \pi$ its conjugacy class. Then for any non-unit element
$1\not=(g_1, g_2, \dots, g_r)\in K$ there exists $i, j\in \{1, 2, \dots, r\}$ such that $[g_i]\not=[g_j]$.

\begin{theorem}\label{GLO}
Let $\pi$ be a discrete group, $r\ge 2$ an integer, and let $K\subset \pi^r=\pi\times \pi\dots \times\pi$ be a subgroup with the property that
$K\cap H=1$ for any subgroup $H\subset \pi^r$ conjugate to the diagonal $\Delta\subset \pi^r$. Then
\begin{eqnarray}
\tc_r(\pi)\ge \cd(K),
\end{eqnarray}
where $\cd(K)$ denotes the cohomological dimension of $K$.
\end{theorem}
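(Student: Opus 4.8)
The plan is to realize $\tc_r(\pi)$ as a sectional category and then transport the computation to the classifying space of $K$ by pulling back. By Definition \ref{rdef1} we have $\tc_r(\pi)=\secat(p_r)$, where $p_r\colon X^I\to X^r$ is the fibration (\ref{rfibration}) and $X=K(\pi,1)$. Let $Y=K(K,1)$ and let $f\colon Y\to X^r$ be a map inducing the inclusion $K\hookrightarrow \pi^r$ on fundamental groups; such an $f$ exists and is unique up to homotopy because $X^r=K(\pi^r,1)$ is aspherical. Sectional category can only drop under pullback — an open cover of $X^r$ by sets admitting local sections of $p_r$ pulls back to an open cover of $Y$ by sets admitting local sections of $f^{*}p_r$ — so
\[
\tc_r(\pi)=\secat(p_r)\ \ge\ \secat(f^{*}p_r).
\]
It therefore suffices to prove $\secat(f^{*}p_r)\ge \cd(K)$.

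Next I would identify the pulled-back fibration. The constant-path inclusion $X\hookrightarrow X^I$ is a homotopy equivalence carrying $p_r$ to the diagonal $d_r\colon X\to X^r$, so $f^{*}p_r$ is fiber-homotopy equivalent to the homotopy pullback of $Y\xrightarrow{f}X^r\xleftarrow{d_r}X$. Since $d_r$ induces on $\pi_1$ the diagonal $\pi\to\pi^r$ with image $\Delta$, while $f$ induces $K\hookrightarrow\pi^r$, and all three spaces are aspherical, this homotopy pullback is a \emph{covering} of $Y$, namely the cover associated to the $K$-set $\pi^r/\Delta$ obtained by restricting the left $\pi^r$-action. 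Its connected components are indexed by the double cosets $K\backslash\pi^r/\Delta$, and the component attached to $Kg\Delta$ is the connected cover of $Y$ corresponding to the subgroup $K\cap g\Delta g^{-1}$.

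Here the hypothesis enters decisively. Each subgroup $g\Delta g^{-1}$ is a conjugate $H$ of the diagonal, so by assumption $K\cap g\Delta g^{-1}=1$ for every $g$; hence every component above corresponds to the trivial subgroup and is a copy of the universal cover $\widetilde Y$ of $Y=K(K,1)$, which is contractible. Thus $f^{*}p_r$ is fiber-homotopy equivalent to a disjoint union of copies of $\widetilde Y$. A local section of such a covering over an open set $U$ carries each (open) path-component of $U$ into a single contractible sheet, so the inclusion of that component into $Y$ lifts through $\widetilde Y$ and is therefore $\pi_1$-trivial; by asphericity of $Y$ this inclusion is then null-homotopic. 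Consequently any open cover of $Y$ admitting local sections of $f^{*}p_r$ is already a categorical cover, whence
\[
\secat(f^{*}p_r)\ \ge\ \cat(Y)\ =\ \cat(K(K,1))\ =\ \cd(K),
\]
using $\cat(K(K,1))=\cd(K)$. Combining this with the first displayed inequality gives $\tc_r(\pi)\ge\cd(K)$.

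The principal obstacle is the middle step: verifying that $f^{*}p_r$ genuinely is a covering of $Y$ with the advertised double-coset decomposition. This rests on the fiber-homotopy identification of $p_r$ with the covering of $X^r$ associated to $\Delta$ (so that the fibers are homotopy discrete) and on correctly tracking the restricted monodromy action of $K$ on $\pi^r/\Delta$. The final passage from $\pi_1$-triviality to honest null-homotopy is exactly where the asphericity of $Y$ is indispensable, and it is this feature that makes the subgroup-theoretic hypothesis on $K$ translate into a lower bound for the cohomological dimension.
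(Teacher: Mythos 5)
Your argument is correct, but it follows a genuinely different route from the paper's. The paper deduces Theorem~\ref{GLO} from its main result: by Theorem~\ref{thm1} the canonical $G$-map $E(G)\to E_\D(G)$ (where $G=\pi^r$) factors through a $G$-CW-complex $L$ with $\dim L=\tc_r(\pi)$; the hypothesis on $K$ says precisely that $K$ meets every member of the family $\D$ trivially, so $K$ acts freely on both $E(G)$ and $E_\D(G)$, making both models of $E(K)$, and then any class in $H^i(K,M)$ with $i>\dim L$ dies because it factors through the cohomology of the at most $\tc_r(\pi)$-dimensional complex $L$; hence $\cd(K)\le\tc_r(\pi)$. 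You avoid the equivariant classifying-space machinery entirely: you use the pullback inequality for sectional category, identify $p_r$ (for aspherical $X$) up to fibre homotopy with the covering of $X^r$ associated to $\Delta$ --- this is essentially the content of Lemma~\ref{lm3} and Corollary~\ref{rcor4} in different clothing, and your one-line justification via the lifting criterion plus Dold's theorem is adequate --- then analyse the pulled-back covering of $Y=K(K,1)$ via double cosets $K\backslash\pi^r/\Delta$, observe that the hypothesis makes every stabilizer $K\cap g\Delta g^{-1}$ trivial so every sheet is the contractible universal cover, and conclude $\secat(f^{*}p_r)\ge\cat(Y)=\cd(K)$ by Eilenberg--Ganea \cite{EG}. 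Note that both proofs ultimately exploit the same fact, namely that the hypothesis on $K$ is equivalent to freeness of the restricted $K$-action on the $G$-set $\pi^r/\Delta$ (equivalently on $E_\D(G)$); your proof localises this at the level of covering spaces, while the paper's localises it at the level of classifying spaces. What each buys: your route is elementary and self-contained, needs no finiteness hypothesis on the aspherical complex (which Theorem~\ref{thm1} formally requires), and in fact your sheet-contraction step gives the null-homotopy directly (project the contraction of the sheet down to $Y$), so the detour through $\pi_1$-triviality is unnecessary; the paper's route is a two-line corollary of its main theorem and sits inside the Bredon-cohomology framework that it then reuses for the finer lower bounds of Theorem~\ref{lower}, which your covering-space argument does not reach. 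Two small points to make explicit if you write this up: the assembly of the null-homotopies over the (open) path components of $U_i$ into a null-homotopy of $U_i$ itself uses that $Y$ is path-connected and locally path-connected (true for CW complexes), and $\secat$ is invariant under fibre homotopy equivalence, which licenses replacing $f^{*}p_r$ by the pulled-back covering.
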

Theorem \ref{GLO} generalises Corollary 3.5.4 from \cite{FGLO} (where the case $r=2$ was covered) as well as the result of
\cite{GrantLuptonOprea} where the class of subgroups of type $K=A\times B$ is considered assuming that $r=2$.

Theorem \ref{GLO} may be applied in the following situations.
In the special case when the subgroup $K\subset \pi^r$ has the form
$K=A_1\times A_2\times \dots\times A_r$
with $A_i\subset \pi$, the assumption of Theorem \ref{GLO} requires that for any collection $g_1,  \dots, g_r\in \pi$
the intersection
$$\bigcap_{i=1}^r g_iA_i g_i^{-1}=\{1\}$$ is trivial. In particular, we may take $K=A\times B\times \pi\times \dots\times \pi$ with the subgroups $A, B\subset \pi$ satisfying
$A\cap gBg^{-1}=\{1\}$ for any $g\in \pi$ as in  \cite{GrantLuptonOprea}.

One may always apply Theorem \ref{GLO} with $K=1\times \pi^{r-1}\subset \pi^r$ which gives the well-known inequality
$$\tc_r(\pi)\ge \cd(\pi^{r-1}).$$

For the free abelian group $\pi=\Z^k$ one has $\tc_r(\pi)=(r-1)k=\cd(\pi^{r-1})$, i.e. in this case the above inequality is sharp.


We shall use Theorem \ref{GLO} to prove the following:

\begin{theorem}\label{higman}
Let $\mathcal{H}$ denote Higman's group with presentation
$$P=\langle x , y, z, w \mid xyx^{-1}y^{-2},\, yzy^{-1}z^{-2},\, zwz^{-1}w^{-2},\, wxw^{-1}x^{-2} \rangle.$$
Then
$$\TC_r(\mathcal{H})=2r \quad \mbox{for any}\quad r\ge 2.$$
\end{theorem}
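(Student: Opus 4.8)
The plan is to prove the two inequalities $\TC_r(\mathcal H)\le 2r$ and $\TC_r(\mathcal H)\ge 2r$ separately. For the upper bound I would first record the standard fact that $\cd(\mathcal H)=2$. Writing $A=\langle x,y,z\rangle$ and $B=\langle z,w,x\rangle$, one has $\mathcal H=A\ast_C B$, where $C=\langle x,z\rangle$ is free of rank two; each of $A$ and $B$ is itself an amalgam of two copies of the Baumslag--Solitar group $\langle a,b\mid aba^{-1}=b^2\rangle$ over an infinite cyclic group (for instance $A=\langle x,y\rangle\ast_{\langle y\rangle}\langle y,z\rangle$), so $\cd(A)=\cd(B)=2$ and $\cd(C)=1$. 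Since an amalgam of groups of cohomological dimension $2$ over a group of cohomological dimension $1$ has cohomological dimension at most $2$, while $\mathcal H$ contains a copy of $BS(1,2)$ and hence $\cd(\mathcal H)\ge 2$, we get $\cd(\mathcal H)=2$. Then the basic estimate (\ref{basicest}) together with $\cat(K(\mathcal H,1))=\cd(\mathcal H)$ yields $\TC_r(\mathcal H)\le r\,\cat(\mathcal H)=2r$.

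For the lower bound I would apply Theorem \ref{GLO} to a carefully chosen subgroup $K\subset\mathcal H^r$. Set $S_{xy}=\langle x,y\rangle\subset A$ and $S_{zw}=\langle z,w\rangle\subset B$; each is a copy of $BS(1,2)$ and hence a two-dimensional duality group. Take
\[
K=S_{xy}\times S_{zw}\times S_{zw}\times\dots\times S_{zw}\subset\mathcal H^r,
\]
with $S_{xy}$ in the first factor and $S_{zw}$ in each of the remaining $r-1$ factors. Since a finite product of duality groups is again a duality group whose dimension is the sum of the dimensions, a standard K\"unneth argument gives $\cd(K)=2+2(r-1)=2r$. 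It then remains to verify the hypothesis of Theorem \ref{GLO}: that every non-unit element of $K$ has two coordinates lying in distinct conjugacy classes of $\mathcal H$.

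Unwinding this condition, an element $(a,b_2,\dots,b_r)\in K$ can violate it only if all of its coordinates are non-trivial and pairwise conjugate in $\mathcal H$; in particular this would force a non-trivial $a\in S_{xy}$ to be conjugate to a non-trivial $b_2\in S_{zw}$. Thus everything reduces to the following key lemma, which I expect to be the main obstacle: \emph{no non-trivial element of $S_{xy}$ is conjugate in $\mathcal H$ to a non-trivial element of $S_{zw}$.} To prove it I would use the action of $\mathcal H=A\ast_C B$ on its Bass--Serre tree. Every element of a vertex group is elliptic, so if a non-trivial $a\in S_{xy}\subset A$ were conjugate to $b\in S_{zw}\subset B$, then $a$ would fix both the base $A$-vertex and a translate of a $B$-vertex, hence fix an edge, and so be conjugate into the edge group $C=\langle x,z\rangle$; the same holds for $b$. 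One is then reduced to determining, inside the sub-amalgams $A$ and $B$, precisely which elements of $S_{xy}$ and of $S_{zw}$ are conjugate into $C$, and to showing these cannot produce a common $\mathcal H$-conjugacy class, which I would settle by a second application of Bass--Serre normal forms within $A$ and $B$. Once the key lemma is established, the hypothesis of Theorem \ref{GLO} holds for $K$, so $\TC_r(\mathcal H)\ge\cd(K)=2r$; combined with the upper bound this gives $\TC_r(\mathcal H)=2r$ for every $r\ge 2$.
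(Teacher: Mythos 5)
Your overall architecture coincides with the paper's proof: the upper bound via $\cd(\mathcal{H})=2$ (the paper gets this from asphericity of the presentation $2$-complex; your amalgam bound $\cd(A\ast_C B)\le\max\{\cd(A),\cd(B),\cd(C)+1\}$, plus $\cd\ge 2$ from an embedded $B(1,2)$, is an equally valid route), and the lower bound by applying Theorem \ref{GLO} to exactly the same subgroup $K=H_{xy}\times H_{zw}^{r-1}$, with $\cd(K)=2r$ because a product of duality groups is a duality group of the summed dimension. Your reduction of the hypothesis of Theorem \ref{GLO} to the single statement that no non-trivial element of $H_{xy}$ is conjugate in $\mathcal{H}$ to a non-trivial element of $H_{zw}$ is also correct: a violating non-unit element of $K$ must have all coordinates non-trivial and pairwise conjugate, which forces such a cross-conjugacy. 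This statement is precisely the fact $gH_{xy}g^{-1}\cap H_{zw}=\{1\}$ on which the paper's proof rests.

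The one genuine gap is that you prove this key lemma only in part, whereas the paper does not prove it at all: it cites \cite{GrantLuptonOprea}, where the lemma is established by a geometric group theory argument due to de Cornulier. Your first Bass--Serre step is sound: conjugate elliptic elements sitting in the two vertex groups of $\mathcal{H}=A\ast_C B$ have fixed subtrees containing vertices of both types, hence each fixes an edge and is conjugate (indeed $A$-conjugate, resp.\ $B$-conjugate) into $C=F(x,z)$. But the remainder is the entire substance of de Cornulier's argument and is only promised in your write-up: one must determine which elements of $H_{xy}$ are $A$-conjugate into $F(x,z)$ --- and note that $F(x,z)$ is neither a vertex nor an edge group of the splitting $A=H_{xy}\ast_{F(y)}H_{yz}$, so this is not a formal consequence of normal forms but requires analysing the induced action of $F(x,z)$ on the Bass--Serre tree of $A$ and its intersections with conjugates of the vertex and edge groups --- and then one must exclude a common $\mathcal{H}$-conjugacy class, which needs the conjugacy theorem for amalgams (elements of $C$ conjugate in $A\ast_C B$ are linked by a finite chain of conjugations performed alternately in $A$ and $B$ and staying inside $C$) together with specific conjugacy computations inside $B(1,2)$. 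As written, your proposal therefore has a gap exactly at the point where the paper inserts a citation; either carrying out this analysis in full or simply citing \cite{GrantLuptonOprea}, as the paper does, would make your proof complete and essentially identical to the paper's.
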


The proof of Theorem \ref{GLO} is given in \S \ref{prthglo}. The proof of Theorem \ref{higman} is given in \S \ref{sec:higman}.
\section{Statement of the main result}

\subsection{} To state the main result we need to recall a few standard notions; we shall mainly follow \cite{Lue}.

Let $G$ be a discrete group. {\it A $G$-CW-complex} is a CW-complex $X$ with a left $G$-action such that for each open cell
$e\subset X$ and each $g\in G$ with $ge\cap e\not=\emptyset$, the left multiplication by $g$ acts identically on $e$.

{\it A family $\F$ of subgroups} of $G$ is a set of subgroups of $G$ which is closed under conjugation and finite intersections.

\subsection{} \label{r222}{\it A classifying $G$-CW-complex} $E_\F(G)$ with respect to a family $\F$ is defined as a $G$-CW-complex $E_\F(G)$ such that
\begin{itemize}\item[{(a)}] the isotropy subgroup of any element of $E_\F(G)$ belongs to $\F$; \item[{(b)}] For any $G$-CW-complex $Y$ whose all
isotropy subgroups belong to $\F$ there is up to $G$-homotopy
exactly one $G$-map $Y\to E_F(G)$.
\end{itemize}

A $G$-CW-complex $X$ is a model for $E_\F(G)$ if and only if all its isotropy subgroups belong to the family $\F$ and for each $H\in \F$ the set
of $H$-fixed points $X^H$ is weakly contractible, i.e. $\pi_i(X^H, x_0)=0$ for any $i=0, 1, \dots$ and for any $x_0\in X^H$. See \cite{Lue}, Theorem 1.9.

\subsection{} We shall assume below that a discrete group $\pi$ is fixed.
Let $G$ denote the group $\pi^r$ where $r\ge 2$. Let $\Delta\subset \pi^r$ denotes the diagonal subgroup (\ref{delta}).
Let $\D$ be the minimal family of subgroups of $G$ containing the diagonal subgroup
$\Delta\subset \pi^r$ and the trivial subgroup, which is closed under conjugations by elements of $\pi^r$ and under taking finite intersections.

We shall consider the classifying spaces $E(G)$ and $E_\D(G)$, where $E(G)$ is the classical classifying space for free actions.
The universal properties of classifying spaces
imply the existence of a
$G$-map
\begin{eqnarray}\label{can1}
f: E(G) \to E_\D(G),
\end{eqnarray}
which is unique up to equivariant homotopy.

Now we may state the main result of this paper:

\begin{theorem}\label{thm1}
Let $X$ be a finite aspherical cell complex, let $\pi=\pi_1(X, x_0)$ be its fundamental group and let $r\ge 2$ be an integer.
Denote $G=\pi^r=\pi\times\dots\times \pi$. Then the topological complexity $\tc_r(X)=\tc_r(\pi)$ coincides with the smallest integer $k$ such that the canonical map (\ref{can1}) can be factorised (up to $G$-equivariant homotopy) as
\begin{eqnarray}\label{rFF}
E(G)\to L\to E_\D(G)
\end{eqnarray}
where $L$ is a $G$-CW-complex of dimension $\le k$. Here $\D$ is the family of subgroups of $G$ defined above. Equivalently, $\tc_r(X)$ coincides with the smallest integer $k$ such that the canonical map (\ref{can1}) is $G$-equivariantly homotopic to a map taking values in the $k$-dimensional skeleton
$$E_\D(G)^{(k)}\subset E_\D(G).$$
\end{theorem}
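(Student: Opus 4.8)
The plan is to pass through Schwarz's join criterion for sectional category, lift the resulting section problem to the universal cover of $X^r$, and recognise the equivariant lifting problem that appears there as exactly the factorisation of the canonical map (\ref{can1}). First I would replace the path fibration $p_r$ of (\ref{rfibration}) by a homotopy-equivalent model: evaluation at $0$ gives a homotopy equivalence $X^I\simeq X$ under which $p_r$ becomes the diagonal $d\colon X\to X^r$, so that $\tc_r(X)=\secat(p_r)=\secat(d)$. The fibre of $p_r$ is a product of $r-1$ based path spaces, hence $(\Omega X)^{r-1}\simeq \pi^{r-1}$, which as a $G$-set ($G=\pi^r$) is precisely the coset space $G/\Delta$, with $G$ acting by left translation; the point stabilisers are exactly the conjugates of the diagonal $\Delta$ of (\ref{delta}). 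Because this fibre is discrete, $p_r$ is fibre-homotopy equivalent to the bundle $E(G)\times_G(G/\Delta)\to X^r=BG$ associated to the universal principal $G$-bundle $E(G)\to BG$ via the left action on $G/\Delta$, and the same holds for all its fibrewise joins.

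Next I invoke Schwarz's theorem: $\secat(p_r)\le k$ if and only if the $(k+1)$-fold fibrewise join $X^I\ast_{X^r}\cdots\ast_{X^r}X^I\to X^r$ admits a section. By the previous step this join bundle is the associated bundle of $E(G)\to BG$ with fibre the iterated join $(G/\Delta)^{\ast(k+1)}$ carrying the diagonal $G$-action. Using the standard correspondence between sections of an associated bundle over $BG$ and $G$-equivariant maps out of the principal bundle $E(G)$, a section of this join bundle is the same (up to homotopy) as a $G$-map $E(G)\to (G/\Delta)^{\ast(k+1)}$. Therefore $\tc_r(X)\le k$ if and only if there exists a $G$-equivariant map $E(G)\to (G/\Delta)^{\ast(k+1)}$.

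Finally I identify the target. The iterated join $(G/\Delta)^{\ast\infty}=\bigcup_n (G/\Delta)^{\ast n}$ is a model for $E_\D(G)$: its isotropy groups are finite intersections of conjugates of $\Delta$, hence lie in $\D$, and for each $H\in\D$ the fixed set $\big((G/\Delta)^{\ast\infty}\big)^H=\big((G/\Delta)^H\big)^{\ast\infty}$ is the infinite join of the nonempty set $(G/\Delta)^H$, hence contractible; by \S\ref{r222} this characterises $E_\D(G)$. Under this identification the canonical map $f$ of (\ref{can1}) is the essentially unique $G$-map $E(G)\to E_\D(G)$, the finite join $(G/\Delta)^{\ast(k+1)}$ is the $k$-dimensional filtration stage, $G$-homotopy equivalent to the skeleton $E_\D(G)^{(k)}$, and join of $k+1$ discrete sets has dimension $k$. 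A $G$-map $E(G)\to (G/\Delta)^{\ast(k+1)}$ is thus the same datum as a $G$-homotopy factorisation of $f$ through a $k$-dimensional $G$-CW-complex as in (\ref{rFF}), which by equivariant cellular approximation and the uniqueness clause of the universal property of $E_\D(G)$ is in turn equivalent to $f$ being $G$-deformable into $E_\D(G)^{(k)}$. Chaining these equivalences gives $\tc_r(X)\le k$ exactly in the stated cases, proving the theorem.

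The step I expect to be the main obstacle is the equivariant translation in the second paragraph: one must verify that $p_r$ (and its fibrewise joins) really is fibre-homotopy equivalent to the associated bundle of $E(G)\to BG$, which relies on the discreteness/asphericity of the fibre $G/\Delta$ to rigidify the monodromy, and then that the open-cover definition of $\secat$ (Definition \ref{rdef1}) matches the join formulation compatibly with this identification, so that global sections over $BG$ correspond precisely to global $G$-maps out of $E(G)$. A secondary technical point, also resting on equivariant cellular approximation, is the reduction of ``factors through an arbitrary $k$-dimensional $G$-CW-complex $L$'' to ``maps into the specific skeleton $E_\D(G)^{(k)}\simeq_G(G/\Delta)^{\ast(k+1)}$''.
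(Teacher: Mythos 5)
Your proposal follows the same architecture as the paper's own proof: Schwarz's join criterion, the identification of the relevant fibration with the bundle $E(G)\times_G(G/\Delta)\to X^r$ associated to the universal $G$-bundle (with $G/\Delta\cong\pi^{r-1}$ carrying exactly the action (\ref{raction}) used in the paper), the correspondence between sections of the $(k+1)$-fold fibrewise join and $G$-maps $E(G)\to(G/\Delta)^{\ast(k+1)}$, and the recognition of the infinite join as a model for $E_\D(G)$. Your one real variant is the entry step: where the paper proves Lemma \ref{lm3} and Corollary \ref{rcor4} (translating Definition \ref{rdef1} into the Schwarz genus of the covering $\widehat{X^r}\to X^r$ by means of asphericity together with Milnor's theorem that open subsets of CW complexes have CW homotopy type), you assert a fibre homotopy equivalence between $p_r$ and that covering. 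This is correct but is the place where you must be explicit: since $X$ is aspherical, $p_{r\ast}$ carries $\pi_1(X^I)\cong\pi$ isomorphically onto the diagonal $\Delta\subset\pi^r$, so $p_r$ lifts through the covering, and the lift is a homotopy equivalence of aspherical total spaces over the CW base $X^r$, hence a fibre homotopy equivalence by Dold's theorem; fibrewise joins then match as well. Either route is legitimate; the paper's open-cover argument has the side benefit of making the $\D$-version $\tc_r^\D$ available independently.

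The genuine gap is in your final identification: the claim that the join stage $(G/\Delta)^{\ast(k+1)}$ is $G$-homotopy equivalent to the skeleton $E_\D(G)^{(k)}$ is false, and with it your reduction via equivariant cellular approximation breaks. The join stage is a proper subcomplex of the $k$-skeleton, since a $k$-simplex of the infinite join may use vertices from arbitrary distinct join factors, not only the first $k+1$. Already for $k=0$ the skeleton contains the vertex sets of all join factors, i.e.\ infinitely many disjoint copies of the orbit $G/\Delta$, and for discrete $G$-sets a $G$-homotopy equivalence is forced to be a $G$-isomorphism, so the skeleton cannot be $G$-equivalent to the single orbit $(G/\Delta)^{\ast 1}=G/\Delta$. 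Cellular approximation pushes a map from a complex $L$ of dimension $\le k$ into $E_\D(G)^{(k)}$, but what your Schwarz--Husemoller step actually requires is a $G$-map into $(G/\Delta)^{\ast(k+1)}$. The missing ingredient --- the crux of the paper's converse direction --- is a compression through the equivariant Whitehead theorem (Theorem \ref{thmwhite}): for every subgroup $H\subset G$ the fixed set $\bigl((G/\Delta)^{\ast(k+1)}\bigr)^H=\bigl((G/\Delta)^H\bigr)^{\ast(k+1)}$ is $(k-1)$-connected when $(G/\Delta)^H\ne\emptyset$, and is empty exactly when $E_\D(G)^H$ is empty, while $E_\D(G)^H$ is contractible otherwise; hence the inclusion $(G/\Delta)^{\ast(k+1)}\hookrightarrow E_\D(G)$ induces a surjection $[L,(G/\Delta)^{\ast(k+1)}]_G\to[L,E_\D(G)]_G$ for every $G$-CW-complex $L$ with $\dim L\le k$. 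Applying this to $\beta\colon L\to E_\D(G)$ in a factorisation $E(G)\stackrel{\alpha}\to L\stackrel{\beta}\to E_\D(G)$ compresses $\beta$ to some $\gamma\colon L\to(G/\Delta)^{\ast(k+1)}$, and $\gamma\circ\alpha$ is the required $G$-map. With this substitution your argument closes and coincides with the paper's proof.
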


Theorem \ref{thm1} and its proof generalise the results of \cite{FGLO} where the case $r=2$ was treated.

It is known that the classifying space $E_\D(G)$ admits a realisation as a $G$-CW-complex of dimension $\max\{3, \cd_\D(G)\}$, see \cite{Lue}.
Here the symbol $\cd_\D(G)$ stands for the cohomological dimension of the trivial  $O_\D$-module $\uz$ and $O_\D$ denotes the orbit category with orbits of type $\D$, see \cite{Lue} or \cite{Mis}.
Hence we obtain the following corollary:

\begin{corollary}
One has
\begin{eqnarray}
\tc_r(\pi) \le \max\{3, \cd_\D(\pi^r)\}, \quad r\ge 2.
\end{eqnarray}
\end{corollary}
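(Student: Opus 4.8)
The plan is to read the bound off directly from the characterisation in \thmref{thm1}, using the low-dimensional model of $E_\D(G)$ recalled just above. Recall that \thmref{thm1} identifies $\tc_r(\pi)$ with the least integer $k$ for which the canonical $G$-map $f\colon E(G)\to E_\D(G)$ factors, up to $G$-equivariant homotopy, as $E(G)\to L\to E_\D(G)$ through some $G$-CW-complex $L$ with $\dim L\le k$. So to establish the inequality it suffices to exhibit one admissible factorisation whose middle term is a $G$-CW-complex of dimension $\max\{3,\cd_\D(\pi^r)\}$.

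The key step is to take $L=E_\D(G)$ itself, but presented via the economical model of L\"uck \cite{Lue}, in which $E_\D(G)$ carries a $G$-CW-structure of dimension $\max\{3,\cd_\D(G)\}$, where $G=\pi^r$. With this choice of $L$ the factorisation is tautological: the first map is $f$ and the second is the identity map of $E_\D(G)$, so the composite is literally $f$ and is in particular $G$-equivariantly homotopic to it. Hence $f$ factors (up to $G$-homotopy, trivially) through a $G$-CW-complex of dimension $\max\{3,\cd_\D(G)\}$.

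Feeding this factorisation into the minimality half of \thmref{thm1} then gives $\tc_r(\pi)\le \dim L=\max\{3,\cd_\D(G)\}=\max\{3,\cd_\D(\pi^r)\}$, which is the assertion. I do not expect a genuine obstacle here: all the real content has already been absorbed into \thmref{thm1} and into the existence of the low-dimensional model for $E_\D(G)$. The only point that needs a moment's care is that the factorisation criterion of \thmref{thm1} imposes no constraint on $L$ beyond being a $G$-CW-complex of the relevant dimension, so that $L=E_\D(G)$ is a legitimate choice and the passage from ``a factorisation exists'' to ``the smallest such $k$ is at most $\dim L$'' is exactly the inequality we want.
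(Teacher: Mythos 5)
Your proof is correct and follows the paper's own route: the paper likewise deduces the corollary from Theorem \ref{thm1} together with L\"uck's realisation of $E_\D(G)$ as a $G$-CW-complex of dimension $\max\{3,\cd_\D(G)\}$, the factorisation through this model being tautological (your identity-map argument is exactly what the paper leaves implicit, and is legitimate since Theorem \ref{thm1} places no condition on $L$ beyond its dimension, and models of $E_\D(G)$ are unique up to $G$-homotopy equivalence).
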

Later in this paper we shall supplement this upper bound on $\tc_r(\pi)$ by lower bounds based on Bredon cohomology.

\subsection{Proof of Theorem \ref{GLO}}\label{prthglo} In this subsection we show how Theorem \ref{GLO} follows from Theorem \ref{thm1}.
Denote $G=\pi^r$ and consider a decomposition
$$E(G)\stackrel{\alpha}\to L\stackrel{\beta}\to E_\D(G),$$ where $L$ is a $G$-CW-complex of dimension $\dim L=\tc_r(\pi)$, as given by Theorem \ref{thm1}; here $\alpha$ and $\beta$ are $G$-equivariant maps.
The subgroup $K\subset G$ acts freely on $E(G)$ and on $E_\D(G)$ - here we use our assumption on the subgroup $K$. Hence
both spaces $E(G)$ and $E_\D(G)$ can be viewed as models of $E(K)$. We may fix $K$-equivariant homotopy equivalences
$a: E(K)\to E(G)$ and $b:E(K)\to E_\D(G)$. For any $G$-equivariant map $f:E(G)\to E_\D(G)$ (as (\ref{can1})) we
have $$f\circ a \, \simeq_K \, b$$ as follows from the universal property of $E(K)$; here the sign $\simeq_K$ denotes a $K$-equivariant homotopy.
Taking $f= \beta\circ \alpha$, we see that for any $\Z[K]$-module $M$ and for any cohomology class
$\gamma\in H^i(E(K), M)=H^i(K, M)$, with $$i>\tc_r(\pi)=\dim L,$$
we may write $\gamma= b^\ast(\gamma')$ where $\gamma'\in H^i(E_\D(G), M)$ and for obvious reasons
the class $\alpha^\ast\beta^\ast(\gamma')\in H^i(E(G), M)$ is trivial. This implies that $$\gamma=b^\ast(\gamma')=a^\ast f^\ast(\gamma')=a^\ast\alpha^\ast\beta^\ast(\gamma')=0,$$ i.e.
 $H^i(K, M)=0$ for any $i>\tc_r(\pi)$ and for any $\Z[K]$-module $M$. This proves that $\cd(K)\le \tc_r(\pi)$ as claimed. \qed

\section{Proof of Theorem \ref{thm1}}

\subsection{The invariant $\tc_r^{\D}(X)$} We shall use a convenient modification of the concept $\tc_r(X)$.
As before we denote by $\D$ the family of subgroups of $G=\pi^r$ generated by the diagonal $\Delta$.

\begin{definition}\label{rdef2}
Let $X$ be a path-connected topological space with fundamental group $\pi=\pi_1(X, x_0)$ and $r\ge 2$ an integer. {\it The $\D$-topological complexity}, $\tc^\D_r(X)$, is defined as the minimal number
$k$ such that $X^r$ can be covered by $k+1$ open subsets
$$X^r = U_0 \cup U_1 \cup \dots U_k$$ with the property that for any $i = 0, 1,2,...,k$
and for any choice of the base point $u_i\in U_i$ the homomorphism $\pi_1(U_i, u_i)\to \pi_1(X^r, u_i)$ induced by
the inclusion $U_i\to X^r$ takes values in a subgroup conjugate to the diagonal $\Delta\subset \pi^r$.
\end{definition}

To ensure that  Definition \ref{rdef2} makes sense,  recall that for any choice of the base point $u=(u_1, \dots, u_r)\in X^r$ one has
$$\pi_1(X^r, u)=\pi_1(X, u_1)\times \pi_1(X, u_2)\times \dots\times \pi_1(X, u_r)$$
and
there is an isomorphism $\pi_1(X^r, u) \to \pi_1(X^r, (x_0, \dots, x_0))=\pi^r$ determined uniquely up to conjugation. Moreover,
the diagonal inclusion $X\to X^r$, $\, $ $x\mapsto (x, x, \dots, x)$, induces the inclusion $\pi\to\pi^r$ onto the diagonal $\Delta$.

\begin{lemma}\label{lm3}
If $X$ is a finite aspherical cell complex then $\tc^\D_r(X)=\tc_r(X)$.
\end{lemma}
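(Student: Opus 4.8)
The plan is to prove the two inequalities $\tc^\D_r(X)\le \tc_r(X)$ and $\tc_r(X)\le \tc^\D_r(X)$ separately. The first inequality should be nearly immediate: if $U\subset X^r$ admits a continuous local section $s:U\to X^I$ of the fibration $p_r$, then I would like to show the inclusion $U\to X^r$ induces, on $\pi_1$, a homomorphism landing in a conjugate of the diagonal $\Delta$. The section $s$ produces, for each point $u=(u_1,\dots,u_r)\in U$, a path in $X$ passing through the prescribed coordinates; intuitively, over $U$ all $r$ endpoints can be joined by a single path, which forces the $r$ coordinate loops coming from a loop in $U$ to be freely homotopic in $X$, hence conjugate in $\pi$. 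Concretely, I would track what the section does on the level of fundamental groups: composing a loop in $U$ with $s$ and then restricting to the coordinate evaluations should exhibit the image of $\pi_1(U)$ in $\pi^r=\pi_1(X^r)$ as consisting of tuples $(g_1,\dots,g_r)$ with all $g_i$ conjugate by a fixed element, i.e.\ lying in a conjugate of $\Delta$. This gives an open cover witnessing $\tc^\D_r(X)\le\tc_r(X)$.

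The reverse inequality is the substantive part and is where the asphericity and finiteness of $X$ are essential. Suppose $X^r=U_0\cup\dots\cup U_k$ is a cover as in Definition \ref{rdef2}, so each inclusion $U_i\hookrightarrow X^r$ sends $\pi_1(U_i)$ into a conjugate of $\Delta$. I want to upgrade each such $U_i$ to an open set admitting an actual section of $p_r$, at the possible cost of passing to a refinement (which does not change the count $k$, by a standard argument using the dimension or a fixed triangulation, or by shrinking to a cover by sets that are themselves sufficiently nice). The strategy is obstruction-theoretic: constructing a section of $p_r$ over $U_i$ is equivalent to lifting the map $U_i\to X^r$ through $p_r$, and since $X$ is aspherical the fibre of $p_r$ (the based loop space, up to homotopy a $K(\pi,1)$-type consideration) has higher homotopy controlled entirely by $\pi$. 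The algebraic condition that $\pi_1(U_i)\to\pi^r$ factors through a conjugate of $\Delta$ is exactly the condition that removes the primary obstruction to such a lift; the remaining obstructions vanish because all higher homotopy groups of $X$ vanish.

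To carry this out cleanly I would first reduce, using asphericity, to a purely homotopy-theoretic statement: replacing $X$ by $K(\pi,1)$ and $X^r$ by $K(\pi^r,1)$, sections of $p_r$ over $U_i$ correspond to homotopies contracting the $r$ evaluation maps to a common path, and over a CW-pair the existence of such a homotopy is governed solely by the induced $\pi_1$-condition when the target is aspherical. Here I would invoke the finiteness of $X$ to ensure $U_i$ can be taken to be (homotopy equivalent to) a finite CW-complex, so that the obstruction theory applies and the cover can be refined without increasing the number of sets beyond $k$. The main obstacle I anticipate is precisely this bookkeeping step — converting the $\pi_1$-level factorisation condition over each $U_i$ into a genuine continuous section while controlling the number of open sets in the cover. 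This is the technical heart of the lemma, and I expect it to follow the same pattern as the $r=2$ case in \cite{FGLO}, using the asphericity to kill all obstructions above the fundamental group and a standard covering-refinement argument to preserve the cardinality $k+1$. Granting this, the two inequalities combine to give $\tc^\D_r(X)=\tc_r(X)$. \qed
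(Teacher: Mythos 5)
Your first inequality is fine and agrees with the paper's easy direction. The hard direction, however, contains a genuine gap exactly where you flagged it: you propose to refine the cover so that each piece supports an actual section, and you assert that the count $k+1$ survives refinement ``by a standard argument using the dimension or a fixed triangulation''. No such standard argument is available here --- refining an open cover in general increases the number of sets, and recombining refined pieces into $k+1$ sets still satisfying the sectioning condition is precisely the kind of claim that needs proof, not a footnote. Your fallback, an obstruction-theoretic lift through $p_r$, is also never set up: the fibre of $p_r$ is $(\Omega X)^{r-1}$, which for aspherical $X$ is homotopy-discrete but highly disconnected, with the non-abelian group $\pi$ acting on its components, so the classical primary-obstruction formalism (which assumes connected fibres with suitable simplicity hypotheses) does not apply off the shelf. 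A further minor slip: finiteness of $X$ does not make $U_i$ homotopy equivalent to a \emph{finite} complex; what is actually used is Milnor's theorem \cite{Mil} that an open subset of a CW-complex is an ANR, hence homotopy equivalent to a (countable) CW-complex.

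The idea you are missing is that the equivalence holds \emph{set-by-set}, so no refinement and no bookkeeping arise at all. Via the exponential law, a section of (\ref{rfibration}) over $U$ is the same thing as a homotopy $h: U\times I\to X$ whose restriction to each subinterval $\left[\frac{i-1}{r-1}, \frac{i}{r-1}\right]$ is a homotopy between the consecutive coordinate projections $q_i|_U$ and $q_{i+1}|_U$; hence a section over $U$ exists if and only if all $r$ projections $q_j|_U: U\to X$ are mutually homotopic. Since $U$ is homotopy equivalent to a CW-complex and $X$ is aspherical, homotopy classes of maps $U\to X$ are in bijection with conjugacy classes of homomorphisms $\pi_1(U)\to\pi$ (\cite{Whi}, Chapter V, Corollary 4.4), so the projections are mutually homotopic if and only if the homomorphisms $(q_j)_\ast$ are pairwise conjugate --- which is exactly the condition of Definition \ref{rdef2} that the inclusion-induced map lands in a conjugate of $\Delta$. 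This proves both inequalities simultaneously over the very same open sets $U_i$, and it is the paper's argument. Your closing remark that the existence of such homotopies is ``governed solely by the induced $\pi_1$-condition when the target is aspherical'' is the correct mechanism; the paper applies it directly to each $U_i$ rather than inside a lifting scheme, which dissolves the bookkeeping step you identified as the technical heart.
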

\begin{proof}
Consider an open subset $U\subset X^r$ and
a continuous section  $s: U\to X^I$ of the fibration (\ref{rfibration}) over $U$.
Using the exponential correspondence, the map $s$ can be viewed as a homotopy $h:U\times I\to X$ where $h(x, t)=s(x)(t)$ for $x\in U, t\in I$.
One has
$$h\left(x, \frac{i-1}{r-1}\right) = x_i\quad\mbox{for}\quad i=1, \dots, r$$
where $x=(x_1, \dots, x_r)$.
Let $q_j: X^r\to X$ (where $j=1, \dots, r$) denote the projection onto the $j$-th factor.
The property of $s$ to be a section of (\ref{rfibration}) can be expressed by saying that the homotopy $h| U\times [\frac{i-1}{r}, \frac{i}{r}]$ connects the projections
$q_{i}: U\to X$ and $q_{i+1}: U\to X$
for $j=1, \dots, r$.

Thus we see that the open sets $U_i\subset X\times X$ which appear in Definition \ref{rdef1} can be equivalently characterised by the property that their projections $q_j: U_i\to X$ on all the factors $j=1, \dots, r$ are homotopic to each other.

Since $X$ is aspherical,
for any connected space $U$, which is homotopy equivalent to a cell complex, the set of homotopy classes of maps $U\to X$ is in one-to-one correspondence
with the set of conjugacy classes of homomorphisms $\pi_1(U, u)\to \pi_1(X, x_0)$, see \cite{Whi}, Chapter V, Corollary 4.4.
Recall that an open subset of a CW-complex is an ANR and therefore is homotopy equivalent to a countable CW-complex, see Theorem 1 in \cite{Mil}.
Thus we see that
an open subset $U\subset X^r$ admits a continuous section of fibration (\ref{fibration}) if and only if the induced homomorphisms
$${q_j}_\ast: \pi_1(U_1, u)\to (X, u_j), \quad j=1, \dots, r$$
are conjugate to each other. Here $u=(u_1, \dots, u_r)\in U$ is a base point.  This latter condition is obviously equivalent to the requirement
(which appears in Definition \ref{rdef2}) that the map on $\pi_1$ induced by the inclusion
$U\to X^r$ takes values in a subgroup conjugate to the diagonal $\Delta$. This completes the proof.
\end{proof}

\begin{corollary}\label{rcor4}
Let $X$ be a connected finite aspherical cell complex with fundamental group $\pi=\pi_1(X, x_0)$ and $r\ge 2$.
Let
$q: \widehat{X^r}\to X^r$ be the connected covering space
corresponding to the diagonal subgroup $$ \Delta\subset \pi^r=\pi_1(X^r, (x_0, \dots, x_0)).$$
Then
the $\D$-topological complexity $\tc^\D_r(X)$ coincides with the Schwarz genus of $q$.
\end{corollary}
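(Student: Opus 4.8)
The plan is to identify the open cover appearing in the definition of $\tc^\D_r(X)$ with the open cover appearing in the definition of the Schwarz genus of $q$, via the classical lifting criterion for covering spaces. The whole statement is a purely covering-theoretic reformulation; asphericity plays no role here (it enters only upstream, in \lemref{lm3}, which identifies $\tc^\D_r$ with $\tc_r$).

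First I would recall that $\secat(q)$ is by definition the smallest $k$ for which $X^r$ admits an open cover $X^r=U_0\cup\dots\cup U_k$ such that $q$ has a continuous section over each $U_i$, and that a continuous section of $q$ over $U_i$ is precisely a lift of the inclusion $U_i\hookrightarrow X^r$ along the covering map $q$. Thus the statement reduces to comparing, for a single open set $U\subset X^r$, the existence of a lift of $U\hookrightarrow X^r$ with the $\pi_1$-condition of \propref{rdef2} (i.e. that the image of $\pi_1(U)\to\pi_1(X^r)$ land in a conjugate of $\Delta$). If I can show these two conditions on $U$ are equivalent, then the two definitions admit exactly the same admissible open sets, and the minimal cardinalities of covers therefore coincide, giving $\tc^\D_r(X)=\secat(q)$ at once, in both directions.

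The core step is the covering-space lifting criterion. Since $X$ is a cell complex, $X^r$ is locally contractible, hence so is every open subset $U$; in particular $U$ is locally path-connected and its path-components are open. I would therefore argue component by component. For a path-component $V\subset U$ with basepoint $v$, a lift of the inclusion over $V$ taking a prescribed value $\hat v\in q^{-1}(v)$ exists if and only if the image of $\pi_1(V,v)\to\pi_1(X^r,v)$ is contained in $q_\ast\pi_1(\widehat{X^r},\hat v)$; and as $\hat v$ ranges over the fibre $q^{-1}(v)$ these subgroups run through exactly the conjugates of $\Delta$, since $q$ corresponds to $\Delta$. Hence a lift over $V$ exists for some value in the fibre precisely when the image of $\pi_1(V,v)$ lies in a conjugate of $\Delta$; and because change of basepoint within $V$ conjugates this image, the phrase ``conjugate to $\Delta$'' is basepoint-independent, matching the wording of \propref{rdef2}. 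Assembling the lifts over the disjoint open path-components produces a global continuous section over $U$, so a section exists over $U$ if and only if, on each path-component, the $\pi_1$-image lands in a conjugate of $\Delta$.

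The only place requiring care—and the step I expect to be the main obstacle—is precisely this reduction to path-components and the verification that the lifting criterion applies: one must invoke local path-connectedness of $U$ (so that path-components are open and the criterion is valid on each), and then check that independently-chosen lifts on disjoint open pieces glue to a continuous section. Once local path-connectedness is in hand, the equivalence of the two conditions is just the standard lifting criterion together with the observation that the conjugates of $\Delta$ are exactly the subgroups $q_\ast\pi_1(\widehat{X^r},\hat v)$ realized along the fibre. With that equivalence established, the equality $\tc^\D_r(X)=\secat(q)$ follows immediately, completing the proof.
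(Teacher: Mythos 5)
Your proposal is correct and follows essentially the same route as the paper, whose proof is a one-sentence assertion that, for an open $U\subset X^r$, the $\pi_1$-image landing in a conjugate of $\Delta$ is equivalent to $q$ admitting a section over $U$, after which the two definitions are compared. You simply supply the standard details the paper leaves implicit --- the covering-space lifting criterion, local path-connectedness of open subsets of a CW-complex, the identification of the subgroups $q_\ast\pi_1(\widehat{X^r},\hat v)$ along the fibre with the conjugates of $\Delta$, and the gluing of sections over open path-components --- all of which are handled correctly.
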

\begin{proof}
For an open subset $U\subset X^r$, the condition that the induced map $\pi_1(U, u) \to \pi_1(X^r, u)$ takes values in a subgroup
conjugate to the diagonal $\Delta$ is equivalent to the condition that $q$ admits a continuous section over $U$.
The Lemma follows by comparing the definitions of $\tc^\D_r(X)$ and of Schwarz genus.
\end{proof}


\subsection{} \label{sec22} The covering $q: \widehat{X^r}\to X^r$ which appears in Corollary \ref{rcor4} is a regular covering only when $\pi$ is abelian. This covering can be characterised by the property that the image of the homomorphism
$q_\ast: \pi_1(\widehat{X^r})\to \pi_1(X^r)$ is a subgroup conjugate to $\Delta\subset \pi^r$. Below we describe this covering in more detail.

\subsection{} Let $\pi$ be a discrete group and $r\ge 2$. Denote $G=\pi^r$.
We shall view $$\pi^{r-1}=\pi\times\dots\times \pi$$  (the Cartesian product of $r-1$ copies of $\pi$) as a discrete topological space with the following left $G=\pi^r$-action:
\begin{eqnarray}\label{raction}
(x_0, x_1, \dots, x_r) \cdot (g_1, \dots, g_r) = (x_0g_1x_1^{-1}, x_1g_2x_2^{-1}, \dots, x_{r-1}g_rx_r^{-1}),
\end{eqnarray}
where $(x_0, \dots, x_r)\in \pi^r$ and $(g_1, \dots, g_r)\in \pi^{r-1}.$ This action is transitive and the isotropy subgroup of the element $(1, 1, \dots, 1)\in \pi^{r-1}$ coincides with the diagonal subgroup
$\Delta\subset \pi^r=G$.
The isotropy subgroups of the other elements are the conjugates of $\Delta$.

Consider the universal covering $\tilde{X^r}\to X^r$. The space $\tilde{X^r}$ carries a free left $G$-action and we may consider $\tilde{X^r}\to X^r$ as a principal $G$-fibration. The associated fibration
\begin{eqnarray}\label{fib}\tilde{X^r}\times_G \pi^{r-1}\to X^r\end{eqnarray} (the Borel construction) coincides with the covering $q: \widehat{X^r}\to X^r$. Indeed,
since
$$\tilde{X^r}\times_G \pi^{r-1} = \tilde{X^r}\times_G (G/\Delta) = \tilde{X^r}/\Delta,$$
the fundamental group
of the space $\tilde{X^r}\times_G \pi^{r-1}$
can be naturally identified with $\Delta$  and therefore (\ref{fib}) coincides with the covering corresponding to $\Delta$. Thus, $\tc^\D_r(X)$ coincides with the Schwarz genus of the fibration (\ref{fib}).

\subsection{} The join $X\ast Y$ of topological spaces $X$ and $Y$ can be defined as the quotient of the product $X\times [0,1]\times Y$ with respect to the equivalence relation $(x, 0, y)\sim (x, 0, y')$ and $(x, 1, y)\sim (x', 1, y)$ for all $x, x'\in X$ and $y, y'\in Y$. We have an obvious embedding
$X\to X\ast Y$ given by $x\mapsto (x, 0, y)$ where $y\in Y$ is arbitrary.

A point $(x, t, y)\in X\times [0,1]\times Y/\sim$ \, can be written as a formal linear combination $(1-t)x+ty$. This notation is clearly consistent with the identifications of the join.

\subsection{} \label{sec24} For an integer $k\ge 0$, let $E_k(\pi^{r-1})$ denote the $(k+1)$-fold join
$$E_k(\pi^{r-1}) = \pi^{r-1}\ast\pi^{r-1}\ast \dots\ast \pi^{r-1}.$$
We shall equip $E_k(\pi^{r-1})$ with the left diagonal $G=\pi^r$-action determined by the $G$-action on $\pi^{r-1}$, see  (\ref{raction}).
Each $E_k(\pi^{r-1})$ is naturally a $k$-dimensional equivariant simplicial complex with $k$-dimensional simplexes in 1-1 correspondence with sequences $(g_0, g_1, \dots, g_k)$ of elements
$g_i\in \pi^{r-1}$.
Note also that $E_k(\pi^{r-1})$ is $(k-1)$-connected. 

\subsection{} Next we apply a theorem of A. Schwarz (see \cite{Sv66}, Theorem 3) stating that genus of a fibration $p: E\to B$ equals the smallest integer $k$ such that
the
fiberwise join $p\ast p\ast \dots \ast p$ of $k+1$ copies of $p: E\to B$ admits a continuous section. We apply this criterion to the fibration (\ref{fib}).
The fiberwise join of $k+1$ copies of (\ref{fib}) is obviously the fibration
\begin{eqnarray}\label{qk}
q_k: \, \tilde X^r \times_G E_k(\pi^{r-1}) \to X^r,
\end{eqnarray}
where the left $G$-action on $E_k(\pi^{r-1})$ is described above. Hence we obtain that the number $\tc_r^\D(X)$ coincides with the smallest $k$ such that
(\ref{qk}) admits a continuous section.

\subsection{} \label{r25}
Finally we apply Theorem 8.1 from \cite{Hue}, chapter 4,  which states that continuous sections of the fibre bundle $q_k$ are in 1-1 correspondence with
$G$-equivariant maps
\begin{eqnarray}\label{rfinally}\tilde X^r \to E_k(\pi^{r-1}).\end{eqnarray}
Thus, we see that $\tc_r^\D(X)$ is the smallest $k$ such that a $G$-equivariant map (\ref{rfinally}) exists.

\subsection{} In the rest of the proof we shall assume that $X$ is an aspherical finite cell complex. We observe that the space of the universal cover
$\tilde X^r$ is a contractible CW-complex with a free $G$-action, thus $\tilde X^r$ is a model of the classifying space $E(G)$.

\subsection{}\label{rsec15} There is a natural equivariant embedding $$E_k(\pi^{r-1}) \stackrel \subset \to E_{k+1}(\pi^{r-1})=E_k(\pi^{r-1})\ast \pi^{r-1}.$$ Using it we may define a $G$-CW- complex
$$E(\pi^{r-1})= \bigcup_{k=1}^\infty E_k(\pi^{r-1}) = \pi^{r-1}\ast \pi^{r-1}\ast \pi^{r-1}\ast \dots,$$
the join of infinitely many copies of $\pi^{r-1}$.
We claim that the $G$-complex $E(\pi^{r-1})$ is a model for the classifying space $E_\D(G)$.
Indeed, $E(\pi^{r-1})$ is a simplicial complex with a simplicial $G$-action hence a $G$-CW-complex (with respect to the barycentric subdivision),
see \cite{Lue}, Example 1.5.

We want to show that: (a) the isotropy subgroup of every point $x\in E(\pi^{r-1})$ belongs to the family $\D$ and (b)
that for any $H\in \D$
the fixed point set $E(\pi^{r-1})^H$ is contractible.
Any point $x\in E(\pi^{r-1})$ can be represented in the form
$$x=t_0x_0+t_1x_1+\dots+t_kx_k$$
where $t_i\in (0, 1]$, $x_i\in \pi^{r-1}$ and $t_0+t_1+\dots+t_k=1$. Then the isotropy subgroup of $x$ is the intersection of the isotropy subgroups of
$x_i\in \pi^{r-1}$ which are all conjugates of $\Delta$; thus the isotropy subgroup of $x$ is a member of the family $\D$.
If $H\in \D$ then the set $E(\pi^{r-1})^H$ coincides with the infinite join
$$(\pi^{r-1})^H\ast (\pi^{r-1})^H\ast (\pi^{r-1})^H\ast\dots $$
which is obviously contractible. We see that properties (a) and (b) are satisfied and therefore the space $E(\pi^{r-1})$ is a model of the classifying space $E_\D(G)$.

\subsection{} Now we shall use the main properties of classifying spaces $E(G)$ and $E_\D(G)$, see \S \ref{r222}.
In particular any $G$-CW-complex $Y$ with isotropy in class $\D$
admits a unique up to $G$-homotopy $G$-map $Y\to E_\D(G)$. In particular, there exists unique up to homotopy maps
$$f: E(G)\to E_\D(G), \quad\mbox{and}\quad g: E_k(\pi^{r-1})\to E_\D(G).$$
One option for  $g$ is the natural inclusion $E_k(\pi^{r-1})\to E(\pi^{r-1})=E_\D(G)$. The map $f$ can be realised as follows.
Let $\phi: \pi^r\to \pi^{r-1}$ be given by
\begin{eqnarray}\label{phi}\phi(x_0, x_1, \dots, x_r)=(y_1, y_2, \dots, y_r), \quad \mbox{where}\quad y_i = x_{i-1}x_{i}^{-1}
\end{eqnarray} where
for $ i=1, \dots r$. It is easy to see that $\phi$ is $G$-equivariant. The natural extension of $\phi$ to the infinite joins
defines a $G$-equivariant map
$$
f: E(G)=\pi^r\ast \pi^r\ast \pi^r\ast\dots  \, \, \to \, \,  E_\D(G)= E(\pi^{r-1})=\pi^{r-1}\ast \pi^{r-1}\ast \pi^{r-1}\ast\dots.
$$

\subsection{} We have shown above (see Lemma \ref{lm3} and \S \ref{r25}) that $\tc_r(\pi)$ coincides with the smallest $k$ such that there exists a $G$-equivariant map
$h: E(G)\to E_k(\pi^{r-1})$. Composing with $g$ (see above) we obtain a $G$-map $g\circ h: E(G)\to E_\D(G)$ which must be $G$-homotopic to $f$.
We see that for $k=\tc_r(\pi)$
the map (\ref{can1}) factorises (up to $G$-homotopy) as
$$E(G)\to E_k(\pi^{r-1})\to E_\D(G),$$
where $\dim E_k(\pi^{r-1})=k$.


On the other hand, suppose  that the map (\ref{can1}) factorises as follows
$$E(G)=E(\pi^r) \stackrel\alpha\to L \stackrel \beta\to E_\D(G)=E(\pi^{r-1})$$
with $\dim L\le d$.
We want to apply the equivariant Whitehead Theorem (see Theorem \ref{thmwhite} below) to the inclusion
$E_d(\pi^{r-1})\to E(\pi^{r-1})$. For any subgroup $H\subset G$ we have
$$E_d(\pi^{r-1})^H= (\pi^{r-1})^H\ast (\pi^{r-1})^H\ast \dots \ast (\pi^{r-1})^H,$$
with $d+1$  factors. Thus,  $E_d(\pi^{r-1})^H$ is $d-1$-connected. Besides, $E(\pi^{r-1})^H$ is contractible. The Whitehead Theorem applied to
$\beta: L \to E(\pi^{r-1})$ gives a $G$-map
$\gamma: L\to E_d(\pi^{r-1})$. Composing with $\alpha$ we obtain  $\gamma\circ\alpha: E(G)\to E_d(\pi^{r-1})$; thus, using the results of Lemma \ref{lm3} and \S \ref{r25},
we obtain that $\tc_r(\pi) \le d$. This completes the proof. \qed

\begin{theorem}[Whitehead theorem, see \cite{May}, Theorem 3.2 in Chapter 1]\label{thmwhite} Let $f: Y\to Z$ be a $G$-map between $G$-CW-complexes such that for each subgroup $H\subset G$ the induced map
$\pi_i(Y^H, x_0)\to \pi_i(Z^H, f(x_0))$ is an isomorphism for $i<k$ and an epimorphism for $i=k$ for any base point $x_0\in Y^H$. Then for any $G$-CW-complex $L$ the induced map
on the set of $G$-homotopy classes
$$f_\ast: [L, Y]_G \to [L, Z]_G $$
is an isomorphism if $\dim L<k$ and an epimorphism if $\dim L\le k$.
\end{theorem}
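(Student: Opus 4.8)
The plan is to reduce the statement to a relative compression problem and then induct over the equivariant cell structure of $L$, transferring each inductive step to a non-equivariant statement about fixed point sets. First I would replace $f$ by an inclusion. Using the equivariant mapping cylinder $M_f$, which is again a $G$-CW-complex, one obtains a factorisation $Y \hookrightarrow M_f \to Z$ in which the first map is the inclusion of a $G$-subcomplex (hence a $G$-cofibration) and the second is a $G$-homotopy equivalence. Since both $[L, -]_G$ and the fixed-point hypotheses are invariant under $G$-homotopy equivalence, it suffices to treat the case in which $f$ is the inclusion of a $G$-subcomplex $Y \subset Z$. Passing through the long exact sequences of the pairs $(Z^H, Y^H)$, the hypothesis (iso for $i<k$, epi for $i=k$) becomes precisely $\pi_i(Z^H, Y^H, x_0) = 0$ for all $i \le k$, all $H \subseteq G$, and all base points; that is, each pair $(Z^H, Y^H)$ is $k$-connected.

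The core of the argument is a \emph{compression lemma}: if $(L, A)$ is a relative $G$-CW-complex all of whose cells have dimension $\le k$, then any $G$-map $L \to Z$ whose restriction to $A$ factors through $Y$ is $G$-homotopic rel $A$ to a map factoring through $Y$. I would prove this by induction over the relative cells of $L$. The only genuine point is a single cell of orbit type $G/H$, namely extending and compressing over $G/H \times D^n$ rel $G/H \times S^{n-1}$ with $n \le k$. Here I would use the adjunction $\mathrm{Map}_G(G/H \times D^n, Z) \cong \mathrm{Map}(D^n, Z^H)$ (and likewise for $Y$ and $S^{n-1}$), which converts the equivariant problem into an ordinary one: a map $D^n \to Z^H$ carrying $S^{n-1}$ into $Y^H$ can be deformed rel $S^{n-1}$ into $Y^H$ precisely because its obstruction class in $\pi_n(Z^H, Y^H)$ vanishes for $n \le k$. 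The partial homotopies built cell by cell extend across the rest of $L$ by the homotopy extension property of $G$-cofibrations, which yields the lemma.

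Finally I would deduce the two assertions. For surjectivity of $f_\ast$ when $\dim L \le k$, apply the lemma with $A = \emptyset$ to an arbitrary $G$-map $L \to Z$; the resulting compression shows every class of $[L, Z]_G$ lies in the image of $[L, Y]_G$. For injectivity when $\dim L < k$, take $G$-maps $g_0, g_1 : L \to Y$ together with a $G$-homotopy $L \times I \to Z$ between them, and apply the lemma to the pair $(L \times I, L \times \partial I)$, whose relative cells have dimension $\le \dim L + 1 \le k$; compressing the homotopy rel $L \times \partial I$ into $Y$ produces the desired $G$-homotopy in $Y$.

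The main obstacle, and the step demanding the most care, is the cellwise induction underlying the compression lemma: one must correctly identify the equivariant extension problem over a cell $G/H \times D^n$ with the vanishing of the relative homotopy group $\pi_n(Z^H, Y^H)$ via the fixed-point adjunction, track the dimension bound $n \le k$ faithfully (so that the shift to $L \times I$ gives exactly the sharp condition $\dim L < k$ for injectivity), and verify that the homotopies glue $G$-equivariantly across cells of differing orbit types. Everything else is formal manipulation of $G$-cofibrations and long exact sequences.
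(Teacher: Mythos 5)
Your proof is correct, and since the paper offers no argument of its own for this theorem (it is quoted verbatim from \cite{May}, Theorem 3.2 of Chapter 1), the right comparison is with the cited source: your route --- mapping-cylinder reduction to a $G$-cofibration, reformulation of the hypotheses as $k$-connectivity of each pair $(Z^H, Y^H)$, and a cellwise compression lemma driven by the adjunction $\mathrm{Map}_G(G/H\times D^n, Z)\cong \mathrm{Map}(D^n, Z^H)$ --- is exactly the standard proof given there. The only details worth making explicit are routine: apply equivariant cellular approximation so that the mapping cylinder $M_f$ is honestly a $G$-CW-complex with $(M_f)^H = M_{f^H}$, and assemble the cell-by-cell compressions skeleton by skeleton (handling the disjointly attached cells of a given dimension simultaneously and concatenating the finitely many resulting homotopies, since $\dim L \le k$ bounds the number of stages).
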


\section{Lower bounds for $\tc_r(\pi)$ via Bredon cohomology}

In the theory of Lusternik - Schnirelmann category the following result plays an important role. If $X=K(\pi, 1)$ is an aspherical space and
$H^n(X;M)\not=0$ for some local coefficient system then $\cat(X)\ge n$, see \cite{EG}, \cite{Sv66}.
A word-to-word generalisation of this result for $\tc(X)$ fails as we have many examples of aspherical spaces $X=K(\pi, 1)$ such that $H^n(X\times X,M)\not=0$ while $\tc(X)<n$.

In \cite{FM} a notion of {\it an essential cohomology class} was introduced and the existence of a nonzero essential class $\xi\in H^n(\pi\times \pi, M)$ implies $\tc(\pi)\ge n$.

In this paper we
generalise the approach of \cite{FGLO} of using Bredon cohomology to detect essential cohomology classes. Namely we show that for any $r\ge 2$
the existence of a nonzero cohomology class $\xi\in H^n(\pi^r, M)$ which can be extended to a Bredon cohomology class
$\underline \xi\in H^n_\D(\pi^r, \um)$ (with respect to the family $\D$ of subgroups of $\pi^r$ which was described earlier) implies that $\tc_r(\pi)\ge n$. For $r=2$ this was proven in \cite{FGLO}.

\subsection{Bredon cohomology} \label{sec31} First we recall the construction of Bredon cohomology, see for example \cite{Mis}.

As above, let $G$ denote the group $\pi^r$, where $r\ge 2$, and $\D$ denote the minimal family of subgroups of $\pi^r$ containing the diagonal
$\Delta\subset \pi^r=G$ and the trivial subgroup $\{1\}\subset G$ which is closed under conjugations and finite intersections.

The symbol $\od$ denotes {\it the orbit category} which has as objects transitive left $G$-sets with isotropy in $\D$
and as morphisms $G$-equivariant maps, see \cite{Bre}. Objects of the category $\od$ have the form $G/H$ where $H\in \D$.

 {\it A (right) $\od$-module} $\um$ is a contravariant functor on the category of orbits $\od$ with values in the category of abelian groups. Such a module
is determined by the abelian groups $\um(G/H)$ where $H\in \D$, and by a group homomorphism
$$\um(G/H) \to \um(G/H')$$ associated with any $G$-equivariant map $G/H' \to G/H$.

Let $X$ be a $G$-CW-complex such that the isotropy subgroup of every point $x\in X$ belongs to the family $\D$. For every subgroup $H\in \D$
we may consider the cell complex $X^H$ of $H$-fixed points and its cellular chain complex $C_\ast(X^H)$. A $G$-map $\phi: G/K\to G/L$ induces a cellular map $X^L\to X^K$ by mapping $x\in X^L$ to $gx\in X^K$ where $g$ is determined by the equation $\phi(K)=gL$ (thus $g^{-1}Kgx=x$ since $g^{-1}Kg\subset L$ and therefore $Kgx=gx$, i.e. $gx\in X^K$). Thus we see that the chain complexes $C_\ast(X^H)$, considered for all $H\in \D$, form a
chain complex of right $\od$-modules which will be denoted ${\underline C}_\ast(X)$; here
$${\underline C}_\ast(X)(G/H) = C_\ast(X^H).$$

Note that the complex  ${\underline C}_\ast(X)$ is free as a complex of $\od$-modules although the complex $C_\ast(X)$ might not be free.

There is an obvious augmentation
$\epsilon: {\underline C}_0(X)\to \uz$ which reduces to the usual augmentation $C_0(X^H)\to \Z$ on each subgroup $H\in \D$.

If $\um$ is a right $\od$-module, we may consider the cochain complex of $\od$-morphisms $\Hom_\od ({\underline C}_\ast(X), \um)$.
Its cohomology
\begin{eqnarray}
H_\D^\ast(X; \um) \, = \, H^\ast(\Hom_\od ({\underline C}_\ast(X), \um))
\end{eqnarray}
is {\it the Bredon equivariant cohomology of $X$ with coefficients in $\um$. }

Let $M$ denote the principal component of $\um$.  Reducing to the principal components we obtain a homomorphism of cochain complexes
$$\Hom_\od ({\underline C}_\ast(X), \um)\to \Hom_{\Z[G]} ({C}_\ast(X), M)$$
and the homomorphism on the cohomology groups
\begin{eqnarray}\label{red}
 H^i_\D(X; \um) \, \to \, H^i_G(X, M).
\end{eqnarray}

\subsection{} \label{rsec34} If the action of $G$ on $X$ is free then the homomorphism (\ref{red}) is an isomorphism and
$$H^i_\D(X; \um) \, \simeq  \, H^i(X/G, M)$$
where on the right we have the usual twisted cohomology. In particular we obtain
$$H^n_\D(E(\pi^r), \um) = H^n(\pi^r, M).$$

\subsection{} \label{rsec35} Suppose now that $X=E(\pi^{r-1})$, viewed as a left $G$-CW-complex, where $G=\pi^r$, see \S \ref{sec24}.
We know that $E(\pi^{r-1})$ is a model for the classifying space $E_\D(G)$ and the classifying complex $E_\D(G)$ is unique up to $G$-homotopy.  Hence we may use the notation
$$H^\ast_\D(E(\pi^{r-1}), \um) = H^\ast_\D(\pi^r, \um).$$
 We obtain
that the number $\cd_\D(\pi^r)$ coincides with the maximal integer $n$ such that
$$H^i_\D(\pi^r, \um) =0$$
for all $i>n$ and for all $\od$-modules $\um$.

\subsection{} Consider now the effect of the $\pi^r$-equivariant map $F: E(\pi^r) \to E(\pi^{r-1})$, see (\ref{rFF}). Note that any two equivariant maps
$E(\pi^r) \to E(\pi^{r-1})$ are equivariantly homotopic.
The induced map on Bredon cohomology
$$F^\ast: H^i_\D(E(\pi^{r-1}), \um) \to H^i_\D(E(\pi^r), \um)$$
together with the notations introduced in \S \ref{rsec34} and \S\ref{rsec35} produce a homomorphism
\begin{eqnarray}\label{Phi}
\Phi: H^i_\D(\pi^r, \um) \, \to \, H^i(\pi^r, M)
\end{eqnarray}
which relates the Bredon cohomology with the usual group cohomology of $\pi^r$.

Now we state a result which gives useful lower bounds for the topological complexity $\tc_r(X)$.

\begin{theorem}\label{lower}
Let $X$ be a finite aspherical cell complex with fundamental group $\pi$. Suppose that
for some $\od$-module $\um$
there exists a Bredon cohomology class
$$\underline \alpha\in H^n_\D(\pi^r, \um)$$
such that the class
$$\Phi(\underline \alpha)\not=0\in H^n(\pi^r, M)$$
is nonzero.
Here $M$ denotes the principal component of $\um$. Then $\tc_r(X) \ge n$.
\end{theorem}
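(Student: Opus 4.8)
The plan is to prove the contrapositive: I will show that if $\tc_r(X) < n$ then the class $\Phi(\underline\alpha)$ is forced to vanish. Set $k=\tc_r(X)$ and assume $k<n$. The starting point is the second (equivalent) formulation in \thmref{thm1}: the canonical map $F\colon E(\pi^r)\to E(\pi^{r-1})=E_\D(G)$ is $G$-equivariantly homotopic to a map whose image lies in the $k$-skeleton $E_\D(G)^{(k)}$. Under the simplicial structure on $E(\pi^{r-1})$ described in \S\ref{sec24}, whose $k$-simplices correspond to sequences $(g_0,\dots,g_k)$ of elements of $\pi^{r-1}$, this skeleton is exactly $E_k(\pi^{r-1})$, the $(k+1)$-fold join. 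Thus $F$ factors, up to $G$-homotopy, as $E(\pi^r)\xrightarrow{F'}E_k(\pi^{r-1})\xrightarrow{j}E(\pi^{r-1})$, where $j$ is the inclusion.

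The key point I would then exploit is that $E_k(\pi^{r-1})$ is admissible for the Bredon cohomology $H^*_\D$. Indeed, as verified in \S\ref{rsec15}, the isotropy subgroup of any point of $E_k(\pi^{r-1})$ is an intersection of conjugates of $\Delta$ and therefore lies in $\D$, so the orbit-category chain complex $\underline C_*(E_k(\pi^{r-1}))$ is defined. Since $E_k(\pi^{r-1})$ is a $k$-dimensional $G$-CW-complex, for every $H\in\D$ the fixed-point subcomplex $E_k(\pi^{r-1})^H=(\pi^{r-1})^H\ast\dots\ast(\pi^{r-1})^H$ (with $k+1$ factors) has dimension at most $k$. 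Hence $\underline C_i(E_k(\pi^{r-1}))(G/H)=C_i(E_k(\pi^{r-1})^H)=0$ for $i>k$, so the cochain complex $\Hom_\od(\underline C_*(E_k(\pi^{r-1})),\um)$ vanishes in degrees exceeding $k$. Consequently $H^n_\D(E_k(\pi^{r-1}),\um)=0$ for every $n>k$.

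Next I would invoke the $G$-homotopy functoriality of Bredon cohomology. Because $F\simeq_G j\circ F'$, the induced map on $H^n_\D$ factors as $F^*=(F')^*\circ j^*$, and $j^*$ takes values in $H^n_\D(E_k(\pi^{r-1}),\um)=0$ since $n>k$. Therefore $F^*=0$ on $H^n_\D$. Recalling from \S\ref{rsec34}, \S\ref{rsec35}, and the definition of $\Phi$ in (\ref{Phi}) that, under the identifications $H^n_\D(E(\pi^{r-1}),\um)=H^n_\D(\pi^r,\um)$ and $H^n_\D(E(\pi^r),\um)\cong H^n(\pi^r,M)$, the homomorphism $\Phi$ is precisely $F^*$, I conclude $\Phi(\underline\alpha)=F^*(\underline\alpha)=0$. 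This contradicts the hypothesis $\Phi(\underline\alpha)\neq 0$, whence $n\le\tc_r(X)$.

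The step I expect to be the main obstacle is ensuring that the intermediate complex through which $F$ factors is genuinely admissible for $H^*_\D$ and has Bredon cohomology vanishing above its geometric dimension. This is why I prefer the skeleton formulation of \thmref{thm1} to the abstract ``$L$ of dimension $\le k$'' version: a general intermediate $L$ need not have isotropy in $\D$, so $H^*_\D(L,\um)$ might not even be defined, whereas $E_k(\pi^{r-1})$ manifestly has isotropy in $\D$. The elementary identification of the fixed-point set of a join with the join of the fixed-point sets then delivers both the isotropy bound and the dimension bound at once, which is exactly what the vanishing argument requires.
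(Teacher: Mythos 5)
Your proof is correct, and its overall strategy is the same as the paper's: assume $\tc_r(X)<n$, use Theorem \ref{thm1} to factor the canonical map $F\colon E(\pi^r)\to E_\D(\pi^r)$ through a $G$-complex of dimension less than $n$, and conclude that $\Phi=F^*$ (under the identifications of \S\ref{rsec34} and \S\ref{rsec35}) vanishes in degree $n$ because the Bredon cohomology of the intermediate complex dies above its dimension. The genuine difference is your choice of intermediate complex, and it buys something real. The paper factors through the abstract $L$ of dimension $<n$ supplied by the first formulation of Theorem \ref{thm1} and then writes down $H^n_\D(L,\um)$; strictly speaking, the construction of $H^*_\D$ in \S\ref{sec31} requires all isotropy groups of the complex to belong to $\D$, and a general $L$ admitting a $G$-map to $E_\D(G)$ only has isotropy groups \emph{contained in} members of $\D$ --- and $\D$, being generated by conjugation and finite intersection from $\Delta$ and the trivial group, need not be closed under passage to subgroups. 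Your insistence on factoring through $E_k(\pi^{r-1})$, whose isotropy lies in $\D$ by the computation recalled from \S\ref{rsec15}, sidesteps this subtlety at no cost: the factorization $E(G)\to E_k(\pi^{r-1})\to E_\D(G)$ for $k=\tc_r(\pi)$ is exactly what the paper's own proof of Theorem \ref{thm1} establishes, so no new argument is required, and your cochain-level vanishing $H^n_\D(E_k(\pi^{r-1}),\um)=0$ for $n>k$ is justified exactly as you say. One micro-quibble: the $k$-skeleton of the $G$-CW structure on $E(\pi^{r-1})$ (taken with respect to the barycentric subdivision, per \S\ref{rsec15}) is not literally $E_k(\pi^{r-1})$; this is harmless, since that skeleton is itself a $k$-dimensional subcomplex of $E_\D(G)$ with isotropy in $\D$ and your vanishing argument applies to it verbatim --- but it is cleaner to cite the factorization through $E_k(\pi^{r-1})$ from the proof of Theorem \ref{thm1} directly, as you do in your second sentence, rather than to assert the identification of the skeleton.
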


\begin{proof} Suppose that $\tc(X)<n$. Then by Theorem \ref{thm1} the map $F: E(\pi^r) \to E_\D(\pi^{r})$ admits a factorisation
$$E(\pi^r) \to L\to E_\D(\pi^r)$$
where $L$ is a $G$-CW-complex of dimension less than $n$. Then the homomorphism
$$\Phi: H^n_\D(\pi^r, \um) \to H^n(\pi^r, M)$$
factorises as
$$\Phi: H^n_\D(\pi^r, \um) \to H^n_\D(L, \um) \to H^n(\pi^r, M)$$
and the middle group vanishes since $\dim L <n$. This contradicts our assumption that
$\Phi(\underline \alpha)\not=0$ for some $\underline \alpha\in H^n_\D(\pi^r, \um)$.
\end{proof}

\section{Proof of Theorem \ref{higman}: Higman's group}\label{sec:higman}

G.\ Higman gave an example of a $4$-generator, $4$-relator group with some remarkable properties.
First, form the group $H_{xy}$ with presentation
\[
\langle x,y \mid xyx^{-1}y^{-2} \rangle.
\]
This group is isomorphic to the Baumslag--Solitar group $B(1,2)$, and hence is a duality group of dimension $2$.

The infinite cyclic group $F(y)$ injects into both $H_{xy}$ and $H_{yz}$, and so we may form $$H_{xyz}:=H_{xy}\ast_{F(y)} H_{yz}.$$
We may also form $H_{zwx}$ as the amalgamated sum of $H_{zw}$ and $H_{wx}$ over $F(w)$. The free group $F(x,z)$ injects into
both $H_{xyz}$ and $H_{zwx}$, and Higman's group is defined to be
$$\mathcal{H}:=H_{xyz}\ast_{F(x,z)} H_{zwx}.$$ It has presentation
\[
\mathcal{H}= \langle x , y, z, w \mid xyx^{-1}y^{-2},\, yzy^{-1}z^{-2},\, zwz^{-1}w^{-2},\, wxw^{-1}x^{-2} \rangle.
\]
The group $\mathcal{H}$ is acyclic (it has the same integer homology as a trivial group), and so $\tilde{H}^\ast(\mathcal{H};k)=0$ for every abelian
group $k$. Moreover,  it has no non-trivial finite dimensional representations over any field and so if $M$ is any coefficient $\Z[\mathcal{H}]$-module which is
finitely generated as an abelian group, then $\tilde{H}^\ast(\mathcal{H};M)=0$. Thus the group $\mathcal{H}$ is difficult to distinguish from a trivial group
using cohomological invariants. On the other hand, since $\mathcal{H}$ is not a free group so we have $\cd(\mathcal{H})\ge 2$. The $2$-dimensional
complex associated to the presentation of $\mathcal H$ given above is aspherical and it follows that
$$\cat(\mathcal{H})=\cd(\mathcal{H})=\dim(K(\mathcal{H},1)=2$$
where, by $\dim$ we refer to the smallest dimension of a $K(H,1)$ complex. Thus the higher topological complexity
of Higman's group satisfies $\TC_r(\mathcal{H})\leq 2r$, using the general result that
$$\TC_r(X) \leq \cat(X^r) \leq \dim(X^r) \leq r\,\dim(X).$$
Note that the zero-divisors cup length over any field is zero, so lower cohomological bounds are hard to come by. In \cite{GrantLuptonOprea} using a
geometric group theory argument due to Yves de Cornulier, it was shown that $gH_{xy}g^{-1}\cap H_{zw}=\{1\}$ for all $g\in \mathcal{H}$. Furthermore,
both of these groups are isomorphic to the Baumslag--Solitar group $B(1,2)$, hence are duality groups of dimension $2$. Thus the product
$$K=H_{xy}\times H_{zw} \times \cdots \times H_{zw}$$
(with $r-1$ factors $H_{zw}$) is a duality group of dimension $2r$ and so $\cd(H_{xy}\times H_{zw}^{r-1})=2r$.
We may apply
Theorem \ref{GLO} to obtain
$$2r = \cd(H_{xy}\times H_{zw}^{r-1}) \leq \TC_r(\mathcal{H}) \leq 2r$$
so that $\TC_r(\mathcal{H})=2r$. \qed

\section{The higher topological complexity of right angled Artin groups}

\subsection{} Let $\Gamma=(V, E)$ be a finite graph and let $H=H_\Gamma$ be the right angled Artin (RAA) group associated to $\Gamma$.
Recall that $H= H_\Gamma$ is given by a presentation with generators $v\in V$ and relations $vw=wv$, for each edge $(v, w)\in E$.
In Theorem \ref{GGY} below we state the result of \cite{GGY} which computes the topological complexity $\tc_r(H_\Gamma)$. Our goal here
is to give a new vastly simplified proof of the relevant lower bound using Theorem \ref{GLO}. An upper bound implying the equality requires finding explicit motion planners
and this may be found in \cite{GGY}. We shall need the following definition.

\begin{definition}
For a graph $\Gamma=(V, E)$ and for an integer $r\ge 2$ we define the number $z_r(\Gamma)$ as the maximal total cardinality $\sum_{i=1}^r |C_i|$
of $r$ cliques $C_1, \dots, C_r\subset V$ with empty intersection, $\cap_{i=1}^r C_i=\emptyset$.
\end{definition}
Recall that a clique of a graph $\Gamma=(V, E)$ is a set of vertices $C\subset V$ such that any two are connected by an edge.
In other words, a clique is a complete induced subgraph of $\Gamma$.

One may equivalently define $z_r(\Gamma) = \max\left\{\sum_{i=1}^r |C_i| - |\cap_{i=1}^r C_i |\right\}$ where
 $C_1,\ldots,C_r$ run over all sequences of $r$ cliques in $\Gamma$.
Since our original definition
is included in this one the only question is whether the new definition can give a
strictly greater number. To see that this cannot happen we note that given an arbitrary sequence of $r$ cliques
$C_1, \dots, C_r$ we may modify it by subtracting from the last clique $C_r$ the intersection $\cap_{i=1}^r C_i$ obtaining a sequence as in
the original definition with the same value of the total sum.
%

\begin{theorem}\label{GGY}\cite{GGY} One has $\tc_r(H_\Gamma) = z_r(\Gamma)$.
\end{theorem}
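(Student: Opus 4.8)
The plan is to prove the equality $\tc_r(H_\Gamma)=z_r(\Gamma)$ in two inequalities, concentrating on the lower bound $\tc_r(H_\Gamma)\ge z_r(\Gamma)$, since the matching upper bound requires explicit motion planners and is cited from \cite{GGY}. The strategy for the lower bound is to exhibit a concrete subgroup $K\subset H_\Gamma^r$ satisfying the hypothesis of \thmref{GLO} and whose cohomological dimension equals $z_r(\Gamma)$. For RAA groups the natural building blocks are the \emph{special subgroups} $H_C$ generated by the vertices of a clique $C\subset V$: such a subgroup is free abelian of rank $|C|$ (the defining relations among clique vertices make them pairwise commuting, and no further relations arise), so $H_C\cong \Z^{|C|}$ and $\cd(H_C)=|C|$.

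First I would fix a sequence of $r$ cliques $C_1,\dots,C_r\subset V$ with $\bigcap_{i=1}^r C_i=\emptyset$ and $\sum_{i=1}^r|C_i|=z_r(\Gamma)$, as provided by the definition of $z_r(\Gamma)$, and form
$$K=H_{C_1}\times H_{C_2}\times\dots\times H_{C_r}\subset H_\Gamma^r.$$
Since each factor is free abelian, $K\cong \Z^{z_r(\Gamma)}$ and $\cd(K)=z_r(\Gamma)$. By the product form of \thmref{GLO} discussed in the excerpt, I must verify that for every choice of $g_1,\dots,g_r\in H_\Gamma$ the intersection $\bigcap_{i=1}^r g_i H_{C_i} g_i^{-1}=\{1\}$. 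This is the step I expect to be the main obstacle, because it is a genuinely group-theoretic statement about intersections of conjugates of special subgroups in a RAA group, rather than a formal manipulation.

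To handle this obstacle I would appeal to the well-understood structure of intersections of (conjugates of) special subgroups in RAA groups. The key classical facts are that a special subgroup $H_C$ is itself a RAA group on the induced subgraph, that each $H_C$ is a \emph{retract} of $H_\Gamma$ and is malnormal-like with respect to its parabolic structure, and crucially that an intersection of parabolic subgroups is again parabolic: for any conjugates, $gH_Cg^{-1}\cap H_{C'}$ is conjugate into a special subgroup on a subset of $C\cap C'$. Iterating this, the intersection $\bigcap_{i=1}^r g_iH_{C_i}g_i^{-1}$ is conjugate into a special subgroup supported on a clique contained in $\bigcap_{i=1}^r C_i=\emptyset$, hence is trivial. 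I would cite the standard references on parabolic subgroups of RAA groups for the precise intersection statement and then assemble these facts into the verification of the hypothesis.

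Having established the hypothesis, \thmref{GLO} yields $\tc_r(H_\Gamma)\ge \cd(K)=z_r(\Gamma)$. For completeness I would then note that the reverse inequality $\tc_r(H_\Gamma)\le z_r(\Gamma)$ follows from the construction of explicit sectional motion planners in \cite{GGY}, which I do not reproduce here. Combining the two inequalities gives $\tc_r(H_\Gamma)=z_r(\Gamma)$, completing the proof. The payoff of this approach, compared with the cohomological cup-length arguments used previously, is that it reduces the lower bound entirely to an elementary computation of a cohomological dimension of an explicit free abelian subgroup, with the only real content being the parabolic intersection property, which is exactly the kind of situation \thmref{GLO} was designed to exploit.
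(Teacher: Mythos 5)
Your proposal is correct and follows the same skeleton as the paper's argument: fix cliques $C_1,\dots,C_r$ realising $z_r(\Gamma)$ with $\bigcap_{i=1}^r C_i=\emptyset$, form $K=H_{C_1}\times\dots\times H_{C_r}\subset H_\Gamma^r$, note $\cd(K)=z_r(\Gamma)$ since each factor is free abelian, apply \thmref{GLO} in its product form for the lower bound, and cite \cite{GGY} for the matching upper bound. Where you diverge is precisely at the step you correctly identify as the only real content, the verification that $\bigcap_{i=1}^r g_iH_{C_i}g_i^{-1}=\{1\}$. You outsource this to the parabolic-subgroup intersection theory for RAA groups (intersections of conjugates of special subgroups are conjugate to special subgroups, with support controlled by the intersection of the supports, as in Duncan--Kazachkov--Remeslennikov and related work). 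That is a valid route, but note one point you should make explicit if you write it up: the general intersection theorem only says $gH_Cg^{-1}\cap H_{C'}$ is conjugate to some special subgroup $H_D$; to conclude $D\subseteq C\cap C'$ you need the additional fact that distinct standard generators of a RAA group are never conjugate (visible in the abelianization), which is what forces the support into $C\cap C'$ and makes the induction terminate at $H_\emptyset=\{1\}$. The paper instead proves the intersection statement from scratch with an elementary and self-contained trick: the endomorphisms $f_A:H\to H$ (killing generators outside $A$) satisfy $f_A\circ f_B=f_{A\cap B}$, restrict to the identity on $[A]$, and since each clique subgroup $[C_i]$ is abelian one gets $f_{C_i}(c_j)=c_i$ for conjugate elements $c_i\in[C_i]$, $c_j\in[C_j]$; composing all $r$ retractions then shows $c_1=f_{\cap C_i}(c_1)\in[\emptyset]=\{1\}$. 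The trade-off: your version leans on nontrivial literature but would generalise (for instance to graph products, where the same parabolic machinery is available), while the paper's three-line retraction argument avoids any structure theory and keeps the whole lower bound elementary, which is exactly the simplification the paper advertises over the earlier cup-length approaches.
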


In \cite{GGY} the main result is stated slightly differently since the authors operate in higher generality and use a different language.
However it easy to see that Theorem \ref{GGY} follows from Theorem 2.7 and Proposition 2.3 in \cite{GGY}. Below we shall see that
the lower bound $z_r(\Gamma) \leq \tc_r(H_\Gamma)$ follows directly and easily from simple results about cliques and Theorem \ref{GLO}.

We first observe that, if $c(\Gamma)$ denotes the size of the maximal clique, then
\begin{eqnarray}\label{eqnest}
(r-1)c(\Gamma)\le z_r(\Gamma)\le rc(\Gamma).
\end{eqnarray}
The right inequality follows from $|C_i|\le c(\Gamma)$. The right inequality can be strict if the graph $\Gamma$ contains $r$ cliques
of maximal size $c(\Gamma)$ with disjoint intersection.  To prove the left inequality we note that
we may always take $C_1=\dots=C_{r-1}$ of size $c(\Gamma)$ and $C_r=\emptyset$.
The estimates given by (\ref{eqnest}) are in fact the algebraic analogue of the topological estimates in (\ref{basicest}).


\subsection{}
Let $A\subset V$ be a subset. We shall denote by $[A]\subset H=H_\Gamma$ the subgroup generated by $A$. We shall also denote by $A^\perp\subset H$
the normal subgroup generated by the set $V-A$.
Note that we do not exclude the case $A=\emptyset$; in that case $[A]=1$ and $A^\perp = H$.

\subsection{} \label{sub:fmaps}
Every subset $A\subset V$ determines a homomorphism $f_A: H\to H$ as follows. We define $f_A$ on the set of generators
$V\subset H$ by setting $f_A(v) =v$ for $v\in A$ and $f_A(v)=1$ for $v\in V-A$. For every relation $vw=wv$ of $H$, either (1) both
vertices $v, w\in A$ are in $A$, or (2) only one of the vertices $v, w$ lies in $A$, or (3) none of $v, w$ lies in $A$. In either case we have
$f_A(v)f_A(w)=f_A(w)f_A(v)$, which shows that the homomorphism $f_A$ is well defined.

For any two subsets $A, B\subset V$ one has
$$f_A\circ f_B =f_{A\cap B}=f_B\circ f_A ,$$
and $f_\emptyset =1$, $f_V={\rm id}_H$. Extending multiplicatively, the image of $f_A$ coincides with $[A]$ and moreover
$f_A(x)=x$ for any $x\in [A]$. In particular, if $A$ is a clique, then ${\rm Im}(f_A)=[A]$ is a free abelian group on the vertices in $A$.

 \begin{lemma}
 For any two subsets $A, B\subset V$ one has $[A]\cap [B]=[A\cap B]$.
 \end{lemma}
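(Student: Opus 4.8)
The plan is to deduce the identity directly from the properties of the retraction homomorphisms $f_A$ established in \S\ref{sub:fmaps}, rather than by arguing with normal forms in the RAA group. The inclusion $[A\cap B]\subseteq [A]\cap [B]$ is immediate from monotonicity of the subgroup-generation operation: since $A\cap B\subseteq A$ and $A\cap B\subseteq B$, every element of $[A\cap B]$ lies in both $[A]$ and $[B]$. So the whole content of the lemma is the reverse inclusion $[A]\cap [B]\subseteq [A\cap B]$.

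For that reverse inclusion I would take an arbitrary $g\in [A]\cap [B]$ and apply the retraction property recalled in \S\ref{sub:fmaps}: because $g\in [A]={\rm Im}(f_A)$ we have $f_A(g)=g$, and likewise $f_B(g)=g$ since $g\in [B]$. Then, using the multiplicativity relation $f_A\circ f_B=f_{A\cap B}$, I compute
$$f_{A\cap B}(g)=f_A\bigl(f_B(g)\bigr)=f_A(g)=g.$$
Since the image of $f_{A\cap B}$ is precisely $[A\cap B]$, this exhibits $g=f_{A\cap B}(g)\in [A\cap B]$, which completes the reverse inclusion and hence the proof.

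The argument carries no genuine obstacle beyond correctly invoking the two facts already proved in \S\ref{sub:fmaps}: that $f_A$ fixes $[A]$ pointwise (so that each $f_A$ is a retraction onto its image $[A]$), and the composition formula $f_A\circ f_B=f_{A\cap B}$. The only point deserving attention is that the first fact is exactly what converts the set-theoretic identity $A\cap B$ into the group-theoretic identity $[A]\cap[B]$; once $f_A(g)=g=f_B(g)$ is in hand, the composition formula does all the work, and no normal-form or rewriting argument inside $H_\Gamma$ is required.
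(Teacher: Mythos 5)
Your proposal is correct and is essentially identical to the paper's own proof: both establish the easy inclusion $[A\cap B]\subseteq [A]\cap[B]$ by monotonicity and then derive the reverse inclusion from the fixed-point property $f_A(x)=x$ for $x\in[A]$ together with the composition formula $f_A\circ f_B=f_{A\cap B}$, concluding $x=f_{A\cap B}(x)\in{\rm Im}(f_{A\cap B})=[A\cap B]$. No further comment is needed.
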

 \begin{proof}
 Obviously $[A\cap B]\subset [A]\cap [B]$; hence we only need to show that $[A]\cap [B]\subset [A\cap B]$. If $x\in [A]\cap [B]$ then
 $f_{A\cap B}(x) =f_A(f_B(x)) =x$ implying that $x\in {\rm Im} (f_{A\cap B})=[A\cap B]$.
 \end{proof}

 \begin{proposition}
For any set of cliques $C_1, C_2, \dots, C_r\subset V$ with empty intersection, $\cap_{i=1}^r C_i=\emptyset$, one has
$$\bigcap_{i=1}^r \left(g_i[C_i]g_i^{-1}\right) = \{1\},$$
for any collection of elements $g_1, \dots, g_r\in H$.
\end{proposition}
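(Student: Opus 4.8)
The plan is to show that any element lying in the intersection $\bigcap_{i=1}^r g_i[C_i]g_i^{-1}$ must be trivial. The key tool will be the retraction homomorphisms $f_A$ from \S\ref{sub:fmaps}, together with the observation that for a clique $C$ the subgroup $[C]$ is free abelian and $f_C$ acts as the identity on it.

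First I would pass to conjugation-invariant information. The problem is that the conjugating elements $g_i$ vary from factor to factor, so the retractions $f_{C_i}$ cannot be applied directly to a putative common element $x\in\bigcap_i g_i[C_i]g_i^{-1}$. To get around this, the natural move is to project to the abelianisation or, better, to use the abelian quotients detected by the cliques. Since each $[C_i]$ is free abelian, I would consider for each $i$ the composite homomorphism sending $H\to H^{\mathrm{ab}}=\Z^V$ and then projecting onto the coordinates indexed by $C_i$; call this $\rho_i:H\to \Z^{C_i}$. Conjugation is invisible in the abelianisation, so $\rho_i(g_i[C_i]g_i^{-1})=\rho_i([C_i])=\Z^{C_i}$, while for an element outside the conjugate we learn nothing. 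The real content is therefore not in the abelianisation alone, and the hard part will be controlling the conjugation.

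The cleaner route, which I expect to be the main line of argument, is the following. Take $1\neq x\in\bigcap_{i=1}^r g_i[C_i]g_i^{-1}$, so $x=g_i c_i g_i^{-1}$ with $c_i\in[C_i]$ for each $i$. Apply the abelianisation map $\mathrm{ab}:H\to H^{\mathrm{ab}}=\Z^V$. Since $\mathrm{ab}(x)=\mathrm{ab}(c_i)$ is supported on the coordinates in $C_i$ for every $i$, the support of $\mathrm{ab}(x)$ lies in $\bigcap_{i=1}^r C_i=\emptyset$, forcing $\mathrm{ab}(x)=0$, i.e. $x$ lies in the commutator subgroup. This reduces matters to showing that a nontrivial $x$ cannot simultaneously be conjugate into each free abelian $[C_i]$ while being a product of commutators; more precisely each $c_i=g_i^{-1}xg_i$ must itself be a commutator in the free abelian group $[C_i]$, hence $c_i=1$, hence $x=1$. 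The key observation making this work is that an element of the abelian group $[C_i]$ that is conjugate (in $H$) to a product of commutators of $H$ must be trivial, because abelian groups are detected faithfully by $\mathrm{ab}$ on themselves: the composite $[C_i]\hookrightarrow H\xrightarrow{\mathrm{ab}} \Z^V$ is injective (it is the inclusion $\Z^{C_i}\subset\Z^V$), and $\mathrm{ab}(c_i)=\mathrm{ab}(x)=0$.

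The main obstacle I anticipate is making the ``support'' step fully rigorous: I must be sure that $\mathrm{ab}(g_i c_i g_i^{-1})=\mathrm{ab}(c_i)$ really is supported on $C_i$, which is immediate since $\mathrm{ab}$ kills conjugation and $c_i\in[C_i]$ maps into the coordinate subgroup $\Z^{C_i}\subset\Z^V$. Then the intersection of supports being empty gives $\mathrm{ab}(x)=0$, and injectivity of $\mathrm{ab}|_{[C_i]}$ closes the argument. I would present it in this order: (1) introduce $x$ and write $x=g_ic_ig_i^{-1}$; (2) apply $\mathrm{ab}$ and compute the support to deduce $\mathrm{ab}(x)=0$; (3) for a single index $i$, use $\mathrm{ab}(c_i)=\mathrm{ab}(x)=0$ and the injectivity of $\mathrm{ab}$ restricted to the free abelian group $[C_i]$ to conclude $c_i=1$, hence $x=1$.
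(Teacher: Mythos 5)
Your proof is correct, but it takes a genuinely different route from the paper's. The paper argues entirely inside $H$ using the retractions $f_A$ of \S\ref{sub:fmaps}: writing $x=g_ic_ig_i^{-1}$, it observes that $c_j$ is conjugate to $c_i$, so $f_{C_i}(c_j)=f_{C_i}(g)\,c_i\,f_{C_i}(g)^{-1}=c_i$ because ${\rm Im}(f_{C_i})=[C_i]$ is abelian; iterating gives $f_{C_1}\circ\cdots\circ f_{C_r}(c_1)=c_1$, and since this composite equals $f_{\cap_i C_i}=f_\emptyset$, whose image is trivial, one concludes $c_1=1$ and $x=1$. You instead abelianise once: $\mathrm{ab}(x)=\mathrm{ab}(c_i)\in\Z^{C_i}\subset\Z^V$ for every $i$, so the support of $\mathrm{ab}(x)$ lies in $\cap_{i=1}^r C_i=\emptyset$, forcing $\mathrm{ab}(x)=0$; injectivity of $\mathrm{ab}$ on the free abelian subgroup $[C_i]$ then yields $c_i=1$, hence $x=1$. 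Both arguments rest on the same two facts --- conjugation dies under a homomorphism with abelian image, and clique subgroups are free abelian on their vertices --- but yours is a one-shot argument through $\Z^V$ that avoids the inductive composition of retractions, and is arguably shorter. What the paper's route buys is uniformity with the surrounding section: the maps $f_A$ and the identity $f_A\circ f_B=f_{A\cap B}$ also prove the preceding lemma $[A]\cap[B]=[A\cap B]$ for \emph{arbitrary} subsets $A,B\subset V$, where abelianisation is useless since $[A]$ need not be abelian, whereas your method is tied to the $C_i$ being cliques. One phrase in your plan is loose: $x\in[H,H]$ does not make $c_i$ ``a commutator in the free abelian group $[C_i]$'' (membership in the commutator subgroup of $H$ says nothing about commutator structure inside $[C_i]$); this causes no harm, however, because the step you actually carry out --- $\mathrm{ab}(c_i)=\mathrm{ab}(x)=0$ combined with the injectivity of $\mathrm{ab}$ restricted to $[C_i]$ --- is the correct one and closes the proof.
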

\begin{proof}
Let $x \in \bigcap_{i=1}^r \left(g_i[C_i]g_i^{-1}\right)$. Then $x=g_i c_i g_i^{-1}$ with $c_i \in [C_i]$ for $i=1,\ldots,r$.
Then, for all $i$ and $j$, we have $c_j = g_j^{-1}g_i c_i (g_j^{-1}g_i)^{-1}$ so that, by the discussion above, we have
$f_{C_i}(c_j)=c_i$ since each $[C_i]={\rm Im}(f_{C_i})$ is abelian. But then applying the equality $f_{C_i}(c_j)=c_i$ inductively $r$ gimes,
we obtain $f_{C_1}\circ f_{C_2} \circ \cdots\circ f_{C_r}(c_1) = c_1$.
Hence using the fact that
$$f_{C_1}\circ f_{C_2} \circ \cdots\circ f_{C_r}=f_{\cap_{i=1}^r C_i}$$
has image equal to $[\cap_{i=1}^r C_i]=\{1\}$ 
we see that $c_1=1$ 
and hence $x=1$.
\end{proof}

\begin{corollary}
Let $C_1, \dots, C_r\subset V$ be a set of cliques with empty intersection. Then the group
$$K=[C_1]\times [C_2]\times \dots\times [C_r]\subset H^r$$
satisfies the condition of Theorem \ref{GLO}, i.e. for any subgroup $L\subset H^r=H\times H\times \cdots\times H$
which is conjugate to the diagonal $\Delta\subset H^r$ one has $L\cap K=\{1\}$.
\end{corollary}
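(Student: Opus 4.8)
The plan is to reduce the Corollary to a formal translation of the Proposition just proved. First I would parametrise the conjugate subgroup. Since $L\subset H^r$ is conjugate to the diagonal $\Delta$, by the description preceding Theorem \ref{GLO} I may write $L=c^{-1}\Delta c$ for some $c=(c_1,\dots,c_r)\in H^r$. A general element of $L$ then has the form
$$c^{-1}(g,g,\dots,g)\,c=(c_1^{-1}gc_1,\ c_2^{-1}gc_2,\ \dots,\ c_r^{-1}gc_r)$$
for a single element $g\in H$.

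Next I would impose the condition that such an element also lies in $K=[C_1]\times[C_2]\times\dots\times[C_r]$. Membership in $K$ means precisely that the $i$-th coordinate $c_i^{-1}gc_i$ lies in $[C_i]$ for every $i=1,\dots,r$. Rewriting each of these conditions gives $g\in c_i[C_i]c_i^{-1}$, and intersecting over all $i$ yields
$$g\in\bigcap_{i=1}^r c_i[C_i]c_i^{-1}.$$

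At this point the Proposition does all the work: applying it with $g_i=c_i$, and using the hypothesis that the cliques $C_1,\dots,C_r$ have empty intersection, shows that this intersection is the trivial subgroup, forcing $g=1$. Consequently the original element of $L\cap K$ equals $(1,\dots,1)$, which proves $L\cap K=\{1\}$, as required by Theorem \ref{GLO}.

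I do not expect any genuine obstacle here, since the entire difficulty was already concentrated in the Proposition, whose proof exploits the retractions $f_{C_i}$ and the identity $f_{C_1}\circ\dots\circ f_{C_r}=f_{\cap_{i=1}^r C_i}$. The only point requiring mild care is matching the conjugation conventions, namely checking that an element of $c^{-1}\Delta c$ really produces the coordinates $c_i^{-1}gc_i$ so that the intersection above is of the form $\bigcap_i g_i[C_i]g_i^{-1}$ appearing in the Proposition (with $g_i=c_i$); since the diagonal is invariant under the choice, any sign convention leads to the same intersection-of-conjugates form after relabelling, so the conclusion is unaffected.
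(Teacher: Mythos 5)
Your proposal is correct and matches the paper's (implicit) argument exactly: the paper states this corollary without separate proof precisely because, as already noted after Theorem \ref{GLO}, for product subgroups $K=A_1\times\cdots\times A_r$ the condition of that theorem is equivalent to $\bigcap_{i=1}^r g_iA_ig_i^{-1}=\{1\}$ for all $g_i$, which is exactly the content of the preceding Proposition. Your explicit parametrisation of $L=c^{-1}\Delta c$ and the rewriting $c_i^{-1}gc_i\in[C_i]\Leftrightarrow g\in c_i[C_i]c_i^{-1}$ is the intended (and correct) translation.
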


We have now shown that  $z_r(\Gamma)$ is given by a certain set of cliques $C_1,\ldots,C_r$ with empty total intersection and these in turn
determine a subgroup $K = [C_1]\times [C_2]\times \dots\times [C_r]\subset H^r$ with
$$\cd(K)=\sum_{i=1}^r |C_i|=z_r(\Gamma)$$
since each $[C_i]$ is free abelian. Applying Theorem \ref{GLO} then provides the lower bound $z_r(\Gamma) \leq \tc_r(H_\Gamma)$.


\section{The $\tc$-generating function}

\subsection{} For a group $H$ consider the following {\it $\tc$-generating function:}
\begin{eqnarray}
\mathcal F_H (x) = \sum_{r=1}^\infty \tc_{r+1}(H)\cdot x^r.
\end{eqnarray}
It is a formal power series whose coefficients are the integers $\tc_r(H)$.

\begin{example}\label{ex81}{\rm
Let $\mathcal H$ be the Higman's group as in Theorem \ref{higman}.
Then $\tc_r(\mathcal H)=2r$ for any $r$ and the $\tc$-generating function has the form
$$\mathcal F_{\mathcal H}(x) \, = \, \sum_{r=1}^\infty 2(r+1)x^r = \frac{2x(2-x)}{(1-x)^2}.$$
}
\end{example}
In this section we make the following observation:
\begin{theorem}\label{gen}
Let $H=H_\Gamma$ be a right angled Artin group. Then
the $\tc$-generating function $\mathcal F_H(x)$ is a rational function of the form
$$\mathcal F_H(x) =\frac{P_\Gamma(x)}{(1-x)^2},$$
where $P_\Gamma(x)$ is an integer polynomial with $P_\Gamma(1) =c(\Gamma)=\cd(H_\Gamma)$.
\end{theorem}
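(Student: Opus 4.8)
The plan is to compute the generating function directly from the combinatorial formula $\tc_r(H_\Gamma) = z_r(\Gamma)$ provided by \thmref{GGY}, reducing the rationality claim to a counting statement about cliques. First I would rewrite $z_r(\Gamma)$ using the equivalent description established earlier, namely $z_r(\Gamma) = \max\{\sum_{i=1}^r |C_i| - |\cap_{i=1}^r C_i|\}$ over all sequences of $r$ cliques. To make the maximum tractable, I would decouple the two competing terms: for a fixed clique $D$ (playing the role of the forced common intersection) one wants to maximise $\sum |C_i|$ subject to each $C_i$ being a clique containing $D$. Since enlarging each $C_i$ independently only helps the sum, the optimal choice is $C_i$ a maximal clique containing $D$, and the intersection term is controlled by $|D|$. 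This suggests that $z_r(\Gamma)$ is eventually a linear function of $r$, which is exactly what a denominator of $(1-x)^2$ encodes.

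The key computation is to show $z_r(\Gamma) = r\cdot c(\Gamma) - a_r$ where $a_r$ is eventually constant in $r$, or more precisely that $z_{r+1}(\Gamma) - z_r(\Gamma) = c(\Gamma)$ for all large $r$. Intuitively, once $r$ is large enough one can always append a maximal clique of size $c(\Gamma)$ as an additional $C_{r+1}$ without changing the intersection (indeed the intersection can already be forced empty), so each extra factor contributes exactly $c(\Gamma)$. I would prove that the sequence of first differences $\tc_{r+1}(H) - \tc_r(H) = z_{r+1}(\Gamma)-z_r(\Gamma)$ stabilises to $c(\Gamma)$ for $r$ sufficiently large. Granting this, write $d_r = z_{r+1}(\Gamma)$ for the coefficients of $\mathcal F_H(x)$; the second differences $d_{r+1}-2d_r+d_{r-1}$ vanish for large $r$, and a power series whose second differences are eventually zero is precisely a rational function with denominator $(1-x)^2$ and integer numerator $P_\Gamma(x)$.

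To pin down $P_\Gamma(1)$, I would use the standard evaluation: if $\mathcal F_H(x) = P_\Gamma(x)/(1-x)^2$ with the coefficients $z_{r+1}(\Gamma)$ eventually equal to $c(\Gamma)\cdot(r+1) + \text{const}$, then the leading linear coefficient of $z_r(\Gamma)$ in $r$ equals $P_\Gamma(1)$. Concretely, multiplying through gives $P_\Gamma(x) = (1-x)^2 \sum_{r\ge 1} z_{r+1}(\Gamma)x^r$, and evaluating the limit of $z_r(\Gamma)/r$ (which is $c(\Gamma)$ by the stabilised first difference) identifies $P_\Gamma(1)$ with $c(\Gamma)$. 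The identity $c(\Gamma)=\cd(H_\Gamma)$ is standard for RAA groups since a maximal clique of size $c(\Gamma)$ generates a free abelian subgroup $\Z^{c(\Gamma)}$ realising the cohomological dimension.

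The main obstacle will be the stabilisation claim $z_{r+1}(\Gamma)-z_r(\Gamma)=c(\Gamma)$ for large $r$, since $z_r$ is defined by a global maximum over all clique sequences and one must argue that an optimal configuration for $r+1$ cliques is obtained from an optimal one for $r$ cliques by adjoining a single maximal clique. The delicate point is handling the intersection term: adding a clique can only shrink the common intersection, so I expect the cleanest route is to fix the common intersection $D$ first, optimise $\sum|C_i|$ for that $D$ (which decomposes as a sum over the $r$ independent choices), and then optimise over $D$, thereby turning $z_r(\Gamma)$ into $\max_D\big(r\cdot m(D) - |D|\big)$ where $m(D)$ is the maximal size of a clique containing $D$; the linear-in-$r$ structure and eventual slope $c(\Gamma)$ then fall out immediately.
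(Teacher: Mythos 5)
Your overall skeleton is essentially the paper's: everything reduces to showing that the first differences $z_{r+1}(\Gamma)-z_r(\Gamma)$ stabilise to $c(\Gamma)$ for $r$ large, after which ``eventually vanishing second differences'' yields the denominator $(1-x)^2$, and reading off the eventual slope gives $P_\Gamma(1)=c(\Gamma)$ (the paper gets this from an explicit summation; your asymptotic-slope evaluation is an equivalent, slightly cleaner way to pin down $P_\Gamma(1)$, and $c(\Gamma)=\cd(H_\Gamma)$ is indeed standard). The genuine gap is in your proposed proof of the stabilisation itself: the closed form $z_r(\Gamma)=\max_D\left(r\cdot m(D)-|D|\right)$ is false. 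Your decoupling step silently replaces the constraint ``$\bigcap_{i=1}^r C_i=D$ exactly'' by ``$C_i\supseteq D$ for each $i$'', and the intersection of independently chosen maximum cliques containing $D$ can be strictly larger than $D$; since the intersection enters with a negative sign, it is not ``controlled by $|D|$'' in the direction you need. Concretely, let $\Gamma=K_n$ be the complete graph, so $H_\Gamma=\Z^n$ and $z_r(\Gamma)=(r-1)n$ (Example \ref{exzn}); your formula with $D=\emptyset$ gives $r\cdot m(\emptyset)-0=rn$, which is unachievable because the unique maximum clique is $V$ itself, so the intersection of the chosen cliques is $V$, never $\emptyset$. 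In general your expression is only an upper bound for $z_r(\Gamma)$ (put $D=\bigcap C_i$ for any competing family), the gap persists for all $r$, and taken at face value the formula would even predict the wrong value $\tc_r(\Z^n)=rn$ instead of $(r-1)n$.

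What actually needs proving splits into two halves. The inequality $z_{r+1}\ge z_r+c(\Gamma)$ is the easy one, and your ``append a maximum clique'' intuition handles it: adjoining any clique to a family with empty total intersection keeps the intersection empty. The hard half is $z_{r+1}\le z_r+c(\Gamma)$: given an optimal family of cliques with empty total intersection, one must exhibit a subfamily with one fewer clique that already has empty intersection, so that discarding the remaining clique loses at most $c(\Gamma)$; this is exactly where your $\max_D$ reduction was supposed to do the work and fails. The paper proves it by a pigeonhole (Helly-type) argument valid for $r>|V|$: if every subfamily obtained by omitting one clique $C_j$ had a common vertex $x_j\in\bigcap_{i\ne j}C_i$, then $x_j\notin C_j$ forces the $x_j$ to be pairwise distinct, contradicting $r>|V|$; see Lemmas \ref{without} and \ref{prev}. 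If you replace your decoupling step by this pigeonhole argument, the rest of your proposal (second differences, evaluation at $x=1$) goes through as written.
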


The proof of Theorem \ref{gen} given below uses the following lemmas in which we assume that
$H=H_\Gamma$ is a RAA group associated to a graph $\Gamma=(V, E)$. We abbreviate the notation $z_r(\Gamma)$ to $z_r$.


\begin{lemma}\label{without}
Suppose that $C_1, \dots, C_r\subset V$ is a sequence of cliques such that $\cap_{i=1}^r C_i = \emptyset$ and $\sum_{i=1}^r |C_i|=z_r$.
If additionally $\cap_{i=1}^{r-1}C_i=\emptyset$ then $z_r=z_{r-1}+c(\Gamma)$.
\end{lemma}

\begin{proof}
Note that  $z_{r-1}=\sum_{i=1}^{r-1}|C_i|$ and $|C_r|=c(\Gamma)$ since otherwise we would be able to increase the sum
$\sum_{i=1}^r |C_i|$ by replacing $C_1, \dots, C_{r-1}$ by a sequence realising $z_{r-1}$ and by replacing $C_r$ by a clique of size $c(\Gamma)$.  The result follows.
\end{proof}

Next we consider the case when $r$ is large enough.

\begin{lemma}\label{prev}
For $r> n=|V|$ one has $z_{r}=z_{r-1}+c(\Gamma)$.
\end{lemma}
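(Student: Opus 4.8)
The plan is to reduce the statement to \lemref{without}, whose hypothesis requires that after removing one of the $r$ cliques the remaining $r-1$ still have empty intersection. Concretely, I would start with a sequence of cliques $C_1, \dots, C_r \subset V$ realising $z_r$, so that $\cap_{i=1}^r C_i = \emptyset$ and $\sum_{i=1}^r |C_i| = z_r$. Since the quantities defining $z_r$ and $z_{r-1}$ (the total cardinality and the intersection) are symmetric in the cliques, I am free to reorder them. Thus it suffices to show that, when $r > n = |V|$, at least one clique can be discarded without destroying the empty-intersection property, i.e.\ that there is an index $j$ with $\cap_{i \ne j} C_i = \emptyset$. Relabelling this $C_j$ as $C_r$ then puts the sequence exactly in the situation of \lemref{without} (the total sum and the total intersection are unchanged, so it still realises $z_r$), and \lemref{without} yields $z_r = z_{r-1} + c(\Gamma)$.

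The heart of the argument is therefore the claim that such an index $j$ exists once $r > n$. I would argue by contradiction: suppose that for every $j \in \{1, \dots, r\}$ the intersection $\cap_{i \ne j} C_i$ is nonempty, and choose a vertex $v_j \in \cap_{i \ne j} C_i$. Then $v_j \in C_i$ for all $i \ne j$, while $v_j \notin C_j$, for otherwise $v_j$ would lie in every $C_i$, contradicting $\cap_{i=1}^r C_i = \emptyset$. A short check shows the $v_j$ are pairwise distinct: if $v_j = v_k$ with $j \ne k$, then from $v_k \in C_i$ for all $i \ne k$ we would get $v_k \in C_j$, contradicting $v_j = v_k \notin C_j$. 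Hence $v_1, \dots, v_r$ are $r$ distinct elements of $V$, forcing $r \le n$ and contradicting $r > n$.

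The main (and essentially the only) obstacle is isolating this pigeonhole phenomenon: the hypothesis $r > n$ enters solely through the distinctness of the witnesses $v_j$, while everything else is bookkeeping handled by the symmetry of $z_r$ in its cliques together with \lemref{without}. Once the existence of a droppable clique is established, no further computation is required.
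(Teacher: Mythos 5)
Your proposal is correct and follows essentially the same argument as the paper: take cliques realising $z_r$, show by contradiction (choosing witnesses $v_j \in \cap_{i\ne j} C_i$ and noting they are pairwise distinct, so $r \le n$) that some clique can be dropped while keeping the intersection empty, then invoke Lemma~\ref{without}. Your write-up even makes the pairwise-distinctness check more explicit than the paper does.
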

\begin{proof}
Let $C_1, \dots, C_r\subset V$ be a sequence of cliques with $\cap_{i=1}^r C_i=\emptyset$ and $\sum_{i=1}^r |C_i|=z_r$.
Our statement will follow from Corollary \ref{without} once we know that the intersection of some $r-1$ cliques out of $C_1, \dots, C_r$ is empty.
Suppose the contrary, i.e. for any fixed $j=1, 2, \dots, r$ the intersection $\cap_{i\not=j}C_i\not=\emptyset$. Then we can find a point
$x_j\in \cap_{i\not=j}C_i$, i.e. $x_j\in C_i$ for any $i\not= j$. Clearly $x_j\not\in C_j$ since the total intersection of the cliques $C_i$ is empty. We obtain a sequence of points $x_1, \dots, x_r\in V$ and from our construction it is obvious that they are all pairwise distinct. But this contradicts our assumption $r>n=|V|$.
\end{proof}

\begin{proof}[Proof of Theorem \ref{gen}]
Using Lemma \ref{prev},
by induction  we find
\begin{eqnarray}\label{general}
z_r=z_n + (r-n)c(\Gamma) \quad \mbox{for}\quad r\ge n.
\end{eqnarray}
Using the equation $\tc_r(H_\Gamma)=z_r(\Gamma)$ given by Theorem \ref{GGY} we see that
\begin{eqnarray*}\F_H(x)&=& \sum_{r=1}^{n-1} z_{r+1}x^r + \sum_{r=n}^{\infty} z_{r+1}x^r \\
&=& \sum_{r=1}^{n-1} z_{r+1}x^r  + (z_n-n c(\Gamma))\sum_{r=n}^\infty x^r + c(\Gamma) \cdot \sum_{r=n}^\infty (r+1)x^r\\
&=& \sum_{r=1}^{n-1} z_{r+1}x^r + (z_n-n c(\Gamma))\frac{x^n}{1-x} + \frac{c(\Gamma)}{(1-x)^2} - c(\Gamma)\sum_{r=0}^{n-1}(r+1)x^r.
\end{eqnarray*}
The result follows since the first and the fourth terms are integer polynomials and the second and the third terms can be written as rational functions with denominator $(1-x)^2$.
\end{proof}

For an RAA group $H_\Gamma$, it is known that the maximum size $c(\Gamma)$ of a clique is equal to the cohomological dimension of
$H_\Gamma$ (which, in fact, is also the LS category of $K(H_\Gamma,1)$). Hence, we obtain the following.

\begin{theorem}
If $H_\Gamma$ is an RAA group and $|V_\Gamma|=n$, then
$$\tc_r(H_\Gamma) = \tc_n(H_\Gamma) + (r-n)\cd(H_\Gamma)$$
for $r \geq n$.
\end{theorem}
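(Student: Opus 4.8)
The plan is to reduce the statement entirely to the combinatorics of cliques via Theorem \ref{GGY}. Since $\tc_r(H_\Gamma) = z_r(\Gamma)$ and, for an RAA group, $c(\Gamma) = \cd(H_\Gamma)$, the asserted identity is equivalent to the purely combinatorial formula $z_r = z_n + (r-n)\,c(\Gamma)$ holding for all $r \geq n$, where I abbreviate $z_r = z_r(\Gamma)$ and $n = |V|$. The whole task is thus to pin down the large-$r$ behaviour of the sequence $z_r$.

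First I would invoke Lemma \ref{prev}, which supplies the single-step recursion $z_r = z_{r-1} + c(\Gamma)$ for every $r > n$. This recursion is the engine of the argument: its justification rests on the counting observation that once $r$ exceeds the number of vertices, any family of $r$ cliques with empty total intersection must already contain a sub-family of $r-1$ cliques with empty intersection, whereupon Lemma \ref{without} forces the remaining clique to have maximal size $c(\Gamma)$.

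Given this recursion, I would finish by a straightforward induction on $r$, taking $r = n$ as the base case, where the claimed formula reduces to the tautology $z_n = z_n$. The inductive step merely iterates the recursion: assuming $z_{r-1} = z_n + (r-1-n)\,c(\Gamma)$, adding one further copy of $c(\Gamma)$ yields $z_r = z_n + (r-n)\,c(\Gamma)$. This is precisely equation (\ref{general}), already extracted during the proof of Theorem \ref{gen}, so no new computation is required.

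To conclude, I would substitute $\tc_r(H_\Gamma) = z_r$ and $\tc_n(H_\Gamma) = z_n$ from Theorem \ref{GGY} and replace $c(\Gamma)$ by $\cd(H_\Gamma)$. The only substantive content lies in Lemma \ref{prev}; once that stabilisation result is in hand, the remainder is an elementary induction, and I do not expect any genuine obstacle beyond correctly aligning the base case at $r = n$ with the recursion valid for $r > n$.
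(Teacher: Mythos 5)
Your proposal is correct and follows essentially the same route as the paper: the theorem is exactly the translation, via Theorem \ref{GGY} and the identity $c(\Gamma)=\cd(H_\Gamma)$, of equation (\ref{general}), which the paper likewise obtains by inducting on the recursion $z_r = z_{r-1} + c(\Gamma)$ of Lemma \ref{prev} (itself resting on Lemma \ref{without} and the pigeonhole argument you sketch). No gaps; your alignment of the base case $r=n$ with the recursion valid for $r>n$ is handled exactly as in the paper.
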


This is interesting because, while $\tc_r(X) - \tc_{r-1}(X) \leq 2\cat(X)$ holds for any $X$
(see \cite{BGRT}, Proposition 3.7), 
we see that for RAA groups we have
a precise description of the difference in terms of homological information about the group,
$$\tc_r(H_\Gamma) - \tc_{r-1}(H_\Gamma) = \cd(H_\Gamma) = \cat(K(H_\Gamma,1))$$
for $r > n$.

\begin{example}{\rm
Suppose that $H=H_\Gamma$ is a free group on $n\ge 2$ generators. In this case the graph $\Gamma$ has $n$ vertices and no edges.
We see that $z_r(\Gamma)=r$ for all $r\ge 2$. Hence
$$\mathcal F_H(x) = \frac{x(2-x)}{(1-x)^2}.$$
}
\end{example}

\begin{example}\label{exzn}{\rm
In the other extreme, suppose that $\Gamma$ is a complete graph on $n$ vertices. Then $H=\Z^n$ and $z_r(\Gamma)= (r-1)n$ for all $r\ge 2$. We obtain
$$\mathcal F_H (x) = \frac{nx}{(1-x)^2}.$$
}
\end{example}

\subsection{} Naturally, one may ask if the phenomenon of Theorem \ref{gen} holds in greater generality. More specifically, we ask
 {\it for which  finite CW-complexes $X$ the formal power series
$$\mathcal F_X (x) \, = \, \sum_{r=1}^\infty \tc_{r+1}(X)\cdot x^r$$
represents a rational function of the form
$$\frac{P_X(x)}{(1-x)^2}$$
where $P_X(x)$ is an integer polynomial satisfying $$P_X(1)=\cat(X)?$$}
The question above is equivalent to the statement that for all  $r$ large enough the following recurrence relation
$$\tc_{r+1}(X) = \tc_r(X) +\cat(X)$$
holds. It would be interesting to know the answer in the case when $X=K(G,1)$ for various classes of groups, say, for the class of hyperbolic groups.

Next we consider the following examples.

\begin{example}\label{exam:genfunc}\label{ex88}
{\rm
If $X=S^{2k+1}$, then we know that $\TC_r(X)=r-1$. Therefore, we have
$$\mathcal{F}_X(x)=\, x\cdot \sum_{r=0}^\infty (r+1)\cdot x^r = \frac{x}{(1-x)^2}.$$
For the even-dimensional sphere $X=S^{2k}$ we have $\tc_r(S^{2k}) = r$ and
$$\mathcal{F}_X(x) = \frac{x(2-x)}{(1-x)^2}.$$
}
\end{example}

\begin{example}\label{exam:liegp}
{\rm
If $X=G$ a compact Lie group, then we know that $\TC_r(G)=\cat(G^{r-1})$. Thus
$$\mathcal{F}_X(x)=\,  \sum_{r=1}^\infty \cat(G^{r})\cdot x^r.$$
Let's take a specific example where we know $\cat(G)$. Let $G=U(n)$. Then we know that $\cat(U(n))={\sf cup}_\Q(U(n))=n$
where ${\sf cup}_\Q$ denotes the rational cuplength. Note that rational cuplength obeys ${\sf cup}_\Q(X^r)=r\cdot {\sf cup}_\Q(X)$ for any $X$. Hence
we have $\cat(U(n)^r)=rn$ from the following:
$$rn=r\cdot {\sf cup}_\Q(X)={\sf cup}_\Q(X^r) \leq \cat(U(n)^r) \leq r\cdot \cat(U(n))=rn.$$
We then have
$$\mathcal{F}_{U(n)}(x) = \,\sum_{r=1}^\infty rn x^r = \frac{nx}{(1-x)^2}.$$
Note that $\mathcal{F}_{U(n)}(x)$ coincides with the $\tc$-generating function for the $n$-dimensional torus $\mathcal{F}_{T^n}(x)$, see example \ref{exzn}.
}
\end{example}


\begin{example}{\rm  Let $M^{2n}$ be a closed simply connected symplectic manifold. Then $\tc_r(M)=rn$ for any $r=1, 2, \dots$, see \cite{BGRT}, Corollary 3.15. Therefore,
$$\mathcal F_M(x) =\frac{nx(2-x)}{(1-x)^2}.$$

}
\end{example}

\begin{example}{\rm
Let $\Sigma_g$ denote the orientable surface of genus $g \geq 2$. Then, since $\tc_r(\Sigma_g) = 2r$ (see \cite[Proposition 3.2]{GGGHMR}),  
we obtain similarly to Example \ref{ex81},
\begin{align*}
\mathcal{F}_{\Sigma_g}(x) & =  \frac{2x(2-x)}{(1-x)^2}.
%
\end{align*}
In the cases $g=0$ and $g=1$ the answers are different, see Example
\ref{exzn} and Example \ref{ex88}.
}
\end{example}


\begin{thebibliography}{FF}

\bibitem{BGRT}
I. Basabe, J. González, Y. Rudyak and  D. Tamaki, \textit{Higher topological complexity and its symmetrization}. Algebr. Geom. Topol. {\bf 14} (2014), no. 4, 2103 -- 2124.

\bibitem{Bre}
G. Bredon, \emph{Equivariant cohomology theories}  Lecture Notes in Mathematics,
No. 34 Springer-Verlag, Berlin-New York 1967.







%
%
%
\bibitem{CLOT}
O. Cornea, G. Lupton, J. Oprea and D. Tanr\'e,
\emph{Lusternik-Schnirelmann Category},
Surveys and Monographs \textbf{103}, Amer. Math. Soc.,  Providence 2003.

\bibitem{Ruth} R. Charney, \emph{An introduction to right-angled Artin groups}, Geom. Dedicata \textbf{125} (2007), 141–158.


\bibitem{EG}
S.~Eilenberg and T.~Ganea, \emph{On the {L}usternik--{S}chnirelmann category of
  abstract groups}, Ann. of Math. (2) \textbf{65} (1957), 517--518.

\bibitem{Far03}
M.~Farber, \emph{Topological complexity of motion planning}, Discrete Comput.
  Geom. \textbf{29} (2003), no.~2, 211--221.

\bibitem{Far04}
M.~Farber, \emph{Instabilities of robot motion}, Topology Appl. \textbf{140}
  (2004), no.~2-3, 245--266.

  \bibitem{FGLO} M. Farber, M. Grant, G. Lupton, J. Oprea, \textit{Bredon cohomology and robot motion planning}, arXiv:1711.10132,
to appear in Algebraic \& Geometric Topology.

\bibitem{FGLO2}
M. Farber, M. Grant, G. Lupton and J. Oprea,
\emph{An upper bound for topological complexity},
arXiv:1807.03994 [math.AT], to appear in "Topology and its Applications".


%

\bibitem{Finv} M. Farber, \textit{Invitation to topological robotics}, Zurich Lectures in Advanced Mathematics, European Mathematical Society (EMS), Z\"urich, 2008.

\bibitem{Frecent} M. Farber, \textit{Configuration Spaces and Robot Motion Planning Algorithms}, in: Combinatorial and Toric Topology,
A. Darby, J. Grbic, Z. L\"u and J. Wu editors,
Lecture Notes Series,
IMS, National University of Singapure,
2017, pp. 263 -- 303.


\bibitem{FM} M. Farber, S. Mescher, \textit{On the topological complexity of aspherical spaces}, preprint  arXiv:1708.06732. To appear in Journal of Topology and Analysis. 


\bibitem{FTY}
M. Farber, S. Tabachnikov, S. Yuzvinsky,
\emph{Topological robotics: motion Planning in projective spaces}, International Mathematics Research Notices
34 (2003), 1853 -- 1870.


\bibitem{FarberYuzvinsky} M. Farber, S. Yuzvinsky, \textit{Topological robotics: subspace arrangements and collision free motion planning}, Geometry, topology, and mathematical physics, 145--156,
Amer. Math. Soc. Transl. Ser. 2, 212, Adv. Math. Sci., 55, Amer. Math. Soc., Providence, RI, 2004.

\bibitem{GG} J. Gonzalez, M. Grant, \textit{Sequential motion planning of non-colliding particles in Euclidean spaces}.
Proc. Amer. Math. Soc. {\bf 143} (2015), no. 10, 4503--4512.

\bibitem{GGGHMR}
J. Gonzalez, B. Gutierrez, A. Guzman, C. Hidber, M. Mendoza, C. Roque,
\emph{Motion planning in tori revisited}, Morfismos vol. 19 no. 1 (2015) 7--18.

\bibitem{GGY} J. Gonzalez, B. Gutierrez and S. Yuzvinsky, \textit{ Higher topological complexity of subcomplexes of products of spheres and related polyhedral product spaces}. Topol. Methods Nonlinear Anal. {\bf 48} (2016), no. 2, 419 -- 451.


\bibitem{GrantLuptonOprea} M. Grant, G. Lupton and J. Oprea, \textit{New lower bounds for the topological complexity of aspherical spaces}, Topology Appl. 189 (2015), 78--91.



\bibitem{GLO}
M.~Grant, G.~Lupton, and J.~Oprea, \emph{Mapping theorems for topological   complexity},
Algebr. Geom. Topol. 15 (2015), no. 3, 1643-1666.


\bibitem{Hat}
A.~Hatcher, \emph{Algebraic topology}, Cambridge University Press, Cambridge,
  2002.

\bibitem{Hue} D. ~Huesmoller, \emph{Fibre Bundles}, McGraw-Hill Company, 1966.


\bibitem{Lue}
W. L\"uck, \emph{Survey on classifying spaces for families of subgroups},
Infinite groups: geometric, combinatorial and dynamical aspects,  269-322,
Progr. Math., 248, Birkhäuser, Basel, 2005.

%
%

\bibitem{May} J.P. May, \emph{Equivariant homotopy and cohomology theory}, AMS Regional Conference Series in Mathematics
{\bf {91}},1996.

\bibitem{MVW} J. Meier, L. VanWyk, \emph{The Bieri-Neumann-Strebel invariants for graph groups},
Proc. London Math. Soc. (3) \textbf{71} (1995), no. 2, 263–280.

\bibitem{Mil} J. Milnor, \textit{On spaces having the homotopy type of a CW complex}, Trans. Amer. Math. Soc. {\bf 90}(1959), 272-280.

\bibitem{Mis} G. ~Mislin, \emph{Equivariant $K$-Homology of the Classifying Space for Proper Actions}, in G. Mislin and A. Valette, \emph{Proper group actions and the Baum-Connes Conjecture}, Birkhauser, 2003.

\bibitem{Rud} Y. Rudyak, \textit{On higher analogues of topological complexity}, Topology and its applications, {\bf 157}(2010), 916- 920; erratum in
Topology and its applications, {\bf 157}(2010), 1118.


\bibitem{Sv66}
A. S. Schwarz, \textit{The genus of a fiber space}, Amer. Math. Soc. Transl. Ser. 2,
 {\bf 55}(1966), pp. 49--140.

\bibitem{tomD}
T. tom Dieck, \emph{Transformation groups},
De Gruyter Studies in Math. 8, 1987.

\bibitem{Yuz} S. Yuzvinsky, \textit{Higher topological complexity of Artin type groups}, in: \lq\lq Configuration spaces\rq\rq, Springer
INdAM Ser., 14 (2016), 119--128.

\bibitem{Whi} G. Whitehead,
\textit{Elements of homotopy theory}, Springer - Verlag, 1978.

\end{thebibliography}
\end{document}